\documentclass[a4paper,11pt]{amsart}
\usepackage{amsmath,amssymb,amsfonts,latexsym}

\setcounter{MaxMatrixCols}{10}

\newtheorem{theorem}{Theorem}[section]
\newtheorem{lemma}[theorem]{Lemma}
\newtheorem{corollary}[theorem]{Corollary}

\input{tcilatex}

\begin{document}
\title[\textbf{On a discontinuous fourth-order boundary value problem }]{%
\textbf{Asymptotic expressions of eigenvalues and fundamental solutions of a
discontinuous fourth-order boundary value problem}}
\author[\textbf{E. \c{S}en}]{\textbf{Erdo\u{g}an \c{S}en}}
\address{\textbf{Department of Mathematics, Faculty of Science and Letters,
Nam\i k Kemal University, 59030 Tekirda\u{g}, TURKEY}}
\email{\textbf{erdogan.math@gmail.com}}
\author[\textbf{S. Araci}]{\textbf{Serkan Araci}}
\address{\textbf{University of Gaziantep, Faculty of Science and Arts,
Department of Mathematics, 27310 Gaziantep, TURKEY}}
\email{\textbf{mtsrkn@hotmail.com}}
\author[\textbf{M. Acikgoz}]{\textbf{Mehmet Acikgoz}}
\address{\textbf{University of Gaziantep, Faculty of Science and Arts,
Department of Mathematics, 27310 Gaziantep, TURKEY}}
\email{\textbf{acikgoz@gantep.edu.tr}}

\begin{abstract}
In the present paper, we deal with a fourth-order boundary value problem
problem with eigenparameter dependent boundary conditions and transmission
conditions at a interior point. A self-adjoint linear operator $A$ is
defined in a suitable Hilbert space $H$ such that the eigenvalues of such a
problem coincide with those of $A$. Following Mukhtarov and his students
methods [2,4,6] we obtain asymptotic formulae for its eigenvalues and
fundamental solutions. Our applications possess a number of interesting
properties for studying in boundary value problems which we state in this
paper.

\vspace{2mm}\noindent \textsc{2010 Mathematics Subject Classification.}
34L20, 35R10.

\vspace{2mm}

\noindent \textsc{Keywords and phrases. }asymptotics of eigenvalues and
eigenfunctions; transmission conditions; fourth-order differential operator.
\end{abstract}

\maketitle




\section{\textbf{Introduction}}


It is well-known that many topics in mathematical physics require the
investigation of eigenvalues and eigenfunctions of Sturm-Liouville type
boundary value problems. In recent years, more and more researchers are
interested in the discontinuous Sturm-Liouville problem (see [1-6]). Various
physics applications of this kind problem are found in many literatures,
including some boundary value problem with transmission conditions that
arise in the theory of heat and mass transfer (see [8,9]). The literature on
such results is voluminous and we refer to [1-11].

Fourth-order discontinuous boundary value problems with eigen-dependent
boundary conditions and with two supplementary transmission conditions at
the point of discontinuity have been investigated in [12,13]. Note that
discontinuous Sturm-Liouville problems with eigen-dependent boundary
conditions and with four supplementary transmission conditions at the points
of discontinuity have been investigated in [3].

In this study, we shall consider a fourth-order differential equation 
\begin{equation}
Lu:=\left( a\left( x\right) u^{\prime\prime}\left( x\right) \right)
^{\prime\prime}+q(x)u(x)=\lambda u(x)  \tag*{(1.1)}
\end{equation}
on $I=\left[ -1,0\right) \cup\left( 0,1\right] ,$ with boundary conditions
at $x=-1$%
\begin{align}
L_{1}u & :=u\left( -1\right) =0,  \tag*{(1.2)} \\
L_{2}u & :=\beta_{1}u^{\prime}(-1)+\beta_{2}u^{\prime\prime}\left( -1\right)
=0,  \tag{1.3}
\end{align}
with the six transmission conditions at the points of discontinuity $x=0,$

\begin{align}
L_{3}u & :=u\left( 0+\right) -u\left( 0-\right) =0,  \tag*{(1.4)} \\
L_{4}u & :=u^{\prime}\left( 0+\right) -u^{\prime}\left( 0-\right) =0, 
\tag{1.5} \\
L_{5}u & :=u^{\prime\prime}\left( 0+\right) -u^{\prime\prime}\left(
0-\right) +\lambda\delta_{1}u^{\prime}\left( 0-\right) =0,  \tag{1.6} \\
L_{6}u & :=u^{\prime\prime\prime}\left( 0+\right) -u^{\prime\prime\prime
}\left( 0-\right) +\lambda\delta_{2}u\left( 0-\right) =0,  \tag{1.7}
\end{align}
and the eigen-dependent boundary conditions at $x=1$%
\begin{align}
L_{7}u & :=\lambda u\left( 1\right) +u^{\prime\prime\prime}(1)=0, 
\tag*{(1.8)} \\
L_{8}u & :=\lambda u^{\prime}\left( 1\right) +u^{\prime\prime }(1)=0, 
\tag{1.9}
\end{align}
where $a\left( x\right) =a_{1}^{4},$ for $x\in\left[ -1,0\right) ,$ $a\left(
x\right) =a_{2}^{4},$ for $x\in\left( 0,1\right] ,$ $a_{1}>0$ and $a_{2}>0$
are given real numbers, $q(x)$ is a given real-valued function continuous in 
$\left[ -1,0\right) \cup\left( 0,1\right] $ and has a finite limit $%
q(0\pm)=\lim_{x\rightarrow0}\pm q(x)$; $\lambda$ is a complex eigenvalue
parameter; $\beta_{i},\delta_{i}$ $\left( i=1,2\right) $ are real numbers
and $\left\vert \beta_{1}\right\vert +\left\vert \beta_{2}\right\vert \neq0,$
$\left\vert \delta_{1}\right\vert +\left\vert \delta_{2}\right\vert \neq0.$

\section{\textbf{Preliminaries}}

Firstly we define the inner product in $L^{2}$ for every $f,g\in L^{2}\left(
I\right) $ as%
\begin{equation*}
\left\langle f,g\right\rangle _{1}=\frac{1}{a_{1}^{4}}\int_{-1}^{0}f_{1}%
\overline{g_{1}}dx+\frac{1}{a_{2}^{4}}\int_{0}^{1}f_{2}\overline{g_{2}}dx,
\end{equation*}%
where $f_{1}(x)=f(x)\left\vert _{\left[ -1,0\right) }\right. ,$ $%
f_{2}(x)=f(x)\left\vert _{\left( 0,1\right] }\right. $. It is easy to see
that $\left( L^{2}\left( I\right) ,\left[ \cdot ,\cdot \right] \right) $ is
a Hilbert space. Now we define the inner product in the direct sum of spaces 
$L^{2}\left( I\right) \oplus 
\mathbb{C}
\oplus 
\mathbb{C}
\oplus 
\mathbb{C}
_{\delta _{1}}\oplus 
\mathbb{C}
_{\delta _{2}}$ by%
\begin{equation*}
\left[ F,G\right] :=\left\langle f,g\right\rangle _{1}+\left\langle
h_{1},k_{1}\right\rangle +\left\langle h_{2},k_{2}\right\rangle
+\left\langle h_{3},k_{3}\right\rangle +\left\langle h_{4},k_{4}\right\rangle
\end{equation*}%
for 
\begin{equation*}
F:=\left( f,h_{1},h_{2},h_{3},h_{4}\right) ,G:=\left(
g,k_{1},k_{2},k_{3},k_{4}\right) \in L^{2}\left( I\right) \oplus 
\mathbb{C}
\oplus 
\mathbb{C}
\oplus 
\mathbb{C}
_{\delta _{1}}\oplus 
\mathbb{C}
_{\delta _{2}}\text{.}
\end{equation*}
Then $Z:=\left( L^{2}\left( I\right) \oplus 
\mathbb{C}
\oplus 
\mathbb{C}
\oplus 
\mathbb{C}
_{\delta _{1}}\oplus 
\mathbb{C}
_{\delta _{2}},\left[ \cdot ,\cdot \right] \right) $ is the direct sum of
modified Krein spaces. A fundamental symmetry on the Krein space is given by%
\begin{equation*}
J:=\left[ 
\begin{array}{ccccc}
J_{0} & 0 & 0 & 0 & 0 \\ 
0 & 1 & 0 & 0 & 0 \\ 
0 & 0 & 1 & 0 & 0 \\ 
0 & 0 & 0 & sgn\delta _{1} & 0 \\ 
0 & 0 & 0 & 0 & sgn\delta _{2}%
\end{array}%
\right] ,
\end{equation*}%
where $J_{0}:L^{2}\left( I\right) \rightarrow L^{2}\left( I\right) $ is
defined by $\left( J_{0}f\right) \left( x\right) =f\left( x\right) .$ We
define a linear operator $A$ in $Z$ by the domain of definition%
\begin{gather*}
D\left( A\right) :=\left( f\text{, }h_{1}\text{, }h_{2}\text{, }h_{3}\text{, 
}h_{4}\right) \in Z\mid f_{1}^{(i)}\in AC_{loc}\left( \left( -1,0\right)
\right) \text{, }f_{2}^{(i)}\in AC_{loc}\left( \left( 0,1\right) \right) 
\text{, }i=\overline{0,3}, \\
Lf\in L^{2}\left( I\right) ,\text{ }L_{k}f=0,\text{ }k=\overline{1,6},\text{ 
}h_{1}=f\left( 1\right) ,\text{ }h_{2}=f^{\prime }(1),\text{ }h_{3}=-\delta
_{1}f^{\prime }\left( 0\right) ,\text{ }h_{4}=-\delta _{2}f(0), \\
AF=\left( Lf,\text{ }-f^{\prime \prime \prime }(1),\text{ }-f^{\prime \prime
}\left( 1\right) ,\text{ }f^{\prime \prime }\left( 0+\right) -f^{\prime
\prime }\left( 0-\right) ,\text{ }f^{\prime \prime \prime }\left( 0+\right)
-f^{\prime \prime \prime }\left( 0-\right) \right) , \\
F=\left( f,\text{ }f(1),\text{ }f^{\prime }(1),-\delta _{1}f^{\prime }(0),%
\text{ }-\delta _{2}f(0)\right) \in D\left( A\right) .
\end{gather*}%
Consequently, the considered problem (1.1)-(1.9) can be rewritten in
operator form as

\begin{equation*}
AF=\lambda F\text{,}
\end{equation*}%
i.e., the problem (1.1)-(1.9) can be considered as the eigenvalue problem
for the operator $A$. Then, we can write the following conclusions:

\begin{theorem}
\textit{The eigenvalues and eigenfunctions of the problem (1.1)-(1.9) are
defined as the eigenvalues and the first components of the corresponding
eigenelements of the operator }$A$\textit{\ respectively.}
\end{theorem}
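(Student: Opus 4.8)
The plan is to prove the claimed correspondence by direct verification in both directions, the point being that the extra coordinates attached to the first component $f$ of an element $F\in D(A)$ are exactly the bookkeeping needed to absorb the $\lambda$-dependent boundary conditions (1.8)--(1.9) and the $\lambda$-dependent transmission conditions (1.6)--(1.7) into the single operator identity $AF=\lambda F$. No genuine obstacle is expected: the argument is essentially routine bookkeeping together with a short regularity remark.

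First I would let $\lambda$ be an eigenvalue of the problem (1.1)--(1.9) with eigenfunction $u$. Since $q$ is continuous on each of $[-1,0)$ and $(0,1]$ and $Lu=\lambda u\in L^{2}(I)$, the function $u$ has the local absolute continuity $u_{1}^{(i)}\in AC_{loc}((-1,0))$, $u_{2}^{(i)}\in AC_{loc}((0,1))$ for $i=\overline{0,3}$ demanded in the definition of $D(A)$, and $u$ satisfies $L_{k}u=0$ for $k=\overline{1,6}$. I then set $F:=\left( u,\,u(1),\,u^{\prime }(1),\,-\delta _{1}u^{\prime }(0),\,-\delta _{2}u(0)\right)$, so that $F\in D(A)$ by construction. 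Rewriting (1.8)--(1.9) as $-u^{\prime \prime \prime }(1)=\lambda u(1)$ and $-u^{\prime \prime }(1)=\lambda u^{\prime }(1)$, and (1.6)--(1.7) (using the continuity relations (1.4)--(1.5), so $u^{\prime }(0-)=u^{\prime }(0)$ and $u(0-)=u(0)$) as $u^{\prime \prime }(0+)-u^{\prime \prime }(0-)=\lambda \left( -\delta _{1}u^{\prime }(0)\right)$ and $u^{\prime \prime \prime }(0+)-u^{\prime \prime \prime }(0-)=\lambda \left( -\delta _{2}u(0)\right)$, a coordinatewise comparison together with $Lu=\lambda u$ gives $AF=\lambda F$. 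Hence $\lambda$ is an eigenvalue of $A$ and $u$ is the first component of the eigenelement $F$.

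Conversely, I would start from $F=(f,h_{1},h_{2},h_{3},h_{4})\in D(A)$ with $AF=\lambda F$ and $F\neq 0$. The membership $F\in D(A)$ forces $h_{1}=f(1)$, $h_{2}=f^{\prime }(1)$, $h_{3}=-\delta _{1}f^{\prime }(0)$, $h_{4}=-\delta _{2}f(0)$ and $L_{k}f=0$ for $k=\overline{1,6}$, while the first coordinate of $AF=\lambda F$ is exactly $Lf=\lambda f$, i.e.\ (1.1). Reading the remaining four coordinates of $AF=\lambda F$ and substituting the expressions for $h_{1},\dots ,h_{4}$ recovers $-f^{\prime \prime \prime }(1)=\lambda f(1)$ and $-f^{\prime \prime }(1)=\lambda f^{\prime }(1)$, that is (1.8)--(1.9), and $f^{\prime \prime }(0+)-f^{\prime \prime }(0-)=-\lambda \delta _{1}f^{\prime }(0)$, $f^{\prime \prime \prime }(0+)-f^{\prime \prime \prime }(0-)=-\lambda \delta _{2}f(0)$, which, combined with $L_{4}f=0$ and $L_{3}f=0$, are precisely (1.6)--(1.7). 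Finally, if $f\equiv 0$ then $h_{1}=\dots =h_{4}=0$ and $F=0$, contradicting $F\neq 0$; hence $f\not\equiv 0$ is an eigenfunction of (1.1)--(1.9) for the eigenvalue $\lambda$.

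The only step needing a small amount of care is the regularity check in the forward direction, namely that an eigenfunction of (1.1)--(1.9) automatically satisfies $f_{1}^{(i)}\in AC_{loc}((-1,0))$, $f_{2}^{(i)}\in AC_{loc}((0,1))$ and $Lf\in L^{2}(I)$; this follows from the continuity of $q$ on the closed subintervals and from the equation $Lf=\lambda f$ itself. Once the two inclusions between solution sets are in place, the map $u\leftrightarrow F$ is a bijection between eigenfunctions of the problem for a fixed $\lambda$ and eigenelements of $A$ for that $\lambda$, which is the assertion of the theorem.
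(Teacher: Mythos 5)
Your verification is correct, and it is worth noting that the paper itself offers no proof of this theorem at all: Theorem 2.1 is stated as an immediate consequence of the construction of $A$, so your two-directional coordinatewise check (matching the components of $AF=\lambda F$ against (1.1), (1.6)--(1.9), with the domain conditions absorbing (1.2)--(1.5)) is precisely the routine argument the paper tacitly relies on. The only wrinkle, which originates in the paper rather than in your write-up, is that the stated domain condition $L_{k}f=0$ for $k=\overline{1,6}$ includes $L_{5}$ and $L_{6}$, which contain $\lambda$ explicitly; for $D(A)$ to define a fixed operator this should read $k=\overline{1,4}$, with (1.6)--(1.7) recovered, exactly as you do, from the fourth and fifth coordinates of $AF=\lambda F$ together with $h_{3}=-\delta_{1}f^{\prime}(0)$ and $h_{4}=-\delta_{2}f(0)$.
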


\begin{theorem}
\textit{The operator }$A$ \textit{is self-adjoint in Krein space} $Z$ (cf.
Theorem 2.2 of $\left[ 10\right] $).
\end{theorem}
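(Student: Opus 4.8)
The plan is to show that $A$ is self-adjoint relative to the indefinite inner product $[\cdot,\cdot]$ of $Z$ in the usual two steps: first that $A$ is symmetric, $[AF,G]=[F,AG]$ for all $F,G\in D(A)$, and then that $A$ is maximal, $D(A^{\ast})\subseteq D(A)$, where $A^{\ast}$ is the adjoint relative to $[\cdot,\cdot]$; together these give $A=A^{\ast}$. (Since $J$ is bounded with bounded inverse, this is equivalent to $JA$ being self-adjoint in the Hilbert space $(Z,[J\cdot,\cdot])$.) As $q$ is real-valued and $a(x)$ is positive and piecewise constant, the expression $Lf=(a(x)f'')''+qf$ is formally self-adjoint on each of $(-1,0)$ and $(0,1)$, and the whole argument is powered by the fourth-order Green's formula, obtained by integrating by parts twice on each subinterval:
\begin{equation*}
\langle Lf,g\rangle_{1}-\langle f,Lg\rangle_{1}=\tfrac{1}{a_{1}^{4}}\bigl(W(f,g;0-)-W(f,g;-1)\bigr)+\tfrac{1}{a_{2}^{4}}\bigl(W(f,g;1)-W(f,g;0+)\bigr),
\end{equation*}
where $W(f,g;x)=\bigl((af'')'\,\overline{g}-af''\,\overline{g'}+af'\,\overline{g''}-f\,(a\overline{g''})'\bigr)(x)$ is the associated bilinear concomitant.

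For the symmetry step I would substitute $F,G\in D(A)$ into this formula and check that its three boundary pieces are cancelled, one endpoint at a time, by the finite-dimensional part of $[AF,G]-[F,AG]$. At $x=-1$ the conditions $L_{1}f=L_{2}f=0$ (and the same for $g$) force $f(-1)=g(-1)=0$ and $\beta_{1}f'(-1)+\beta_{2}f''(-1)=0$, so that, since $(\beta_{1},\beta_{2})\neq(0,0)$ and the $\beta_{i}$ are real, $W(f,g;-1)=0$ outright. At $x=1$ the coordinates $f(1),f'(1)$ of $F$, together with the entries $-f'''(1),-f''(1)$ of $AF$ and the $g$-counterparts, were built precisely so that the corresponding finite-dimensional terms of $[AF,G]$ and $[F,AG]$ cancel $\tfrac{1}{a_{2}^{4}}W(f,g;1)$. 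At the transmission point $x=0$ the continuity conditions $L_{3}f=L_{4}f=0$ delete from $W(f,g;0+)-W(f,g;0-)$ the parts carrying $f(0\pm),f'(0\pm)$, while the parts carrying the jumps of $f''$ and $f'''$ are absorbed by the $\mathbb{C}_{\delta_{1}}\oplus\mathbb{C}_{\delta_{2}}$ contributions built from $-\delta_{1}f'(0),-\delta_{2}f(0)$; the $\delta_{i}$-weighting of the inner products on $\mathbb{C}_{\delta_{i}}$ (equivalently the $\operatorname{sgn}\delta_{i}$ entries of $J$) is exactly what makes this cancellation independent of the signs of $\delta_{1},\delta_{2}$. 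Collecting everything yields $[AF,G]=[F,AG]$.

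For maximality, let $G=(g,k_{1},k_{2},k_{3},k_{4})\in D(A^{\ast})$, so that there is $G^{\ast}=(g^{\ast},k_{1}^{\ast},k_{2}^{\ast},k_{3}^{\ast},k_{4}^{\ast})\in Z$ with $[AF,G]=[F,G^{\ast}]$ for all $F\in D(A)$. Testing against $F=(\varphi,0,0,0,0)$ with $\varphi\in C_{0}^{\infty}(-1,0)$ or $\varphi\in C_{0}^{\infty}(0,1)$ --- all such $F$ lie in $D(A)$ --- gives, distributionally and then, by continuity of $q$ and constancy of $a$, classically, $Lg=g^{\ast}$ on each subinterval, with $g_{1}^{(i)}\in AC_{loc}(-1,0)$, $g_{2}^{(i)}\in AC_{loc}(0,1)$ for $i=\overline{0,3}$ and $Lg\in L^{2}(I)$. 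Hence the Green's formula applies to the pair $(f,g)$, and feeding it, together with $Lg=g^{\ast}$, into $[AF,G]=[F,G^{\ast}]$ leaves
\begin{equation*}
\tfrac{1}{a_{1}^{4}}\bigl(W(f,g;0-)-W(f,g;-1)\bigr)+\tfrac{1}{a_{2}^{4}}\bigl(W(f,g;1)-W(f,g;0+)\bigr)+B(F,G)=C(F,G^{\ast}),
\end{equation*}
valid for every $f$ occurring as the first component of an element of $D(A)$, where $B$ and $C$ collect the finite-dimensional terms. All three terms are linear in the boundary data of $f$, and a Naimark-type patching argument --- for any admissible tuple of values of $f$ and its derivatives at $-1,0\pm,1$ (respecting the constraints coming from $L_{1},\dots,L_{4}$) one can exhibit an $f\in D(A)$ realising it, e.g. a polynomial on each subinterval --- then lets one equate coefficients of the now-independent boundary data of $f$ and solve the resulting linear relations among the boundary values of $g$, the coordinates $k_{j}$, and the coordinates $k_{j}^{\ast}$, concluding that $g$ satisfies the boundary and transmission conditions defining $D(A)$ and that $k_{1}=g(1),k_{2}=g'(1),k_{3}=-\delta_{1}g'(0),k_{4}=-\delta_{2}g(0)$; that is, $G\in D(A)$ and $A^{\ast}G=AG$. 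With the symmetry already established, $A=A^{\ast}$. (Density of $D(A)$ in $Z$, needed so that $A^{\ast}$ is defined, follows by the same patching, using density of $C_{0}^{\infty}$ in $L^{2}$ on each subinterval together with surjectivity onto the finite-dimensional coordinates; I would record it first.)

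I expect the main obstacle to be the sign bookkeeping at the two $\lambda$-dependent parts of the problem: one has to verify that the coordinates $f(1),f'(1)$, with operator images $-f'''(1),-f''(1)$, reproduce term by term and with the right signs the pieces of $\tfrac{1}{a_{2}^{4}}W(f,g;1)$, and that $-\delta_{1}f'(0),-\delta_{2}f(0)$, with operator images the jumps of $f''$ and $f'''$ at $0$, do the same for $\tfrac{1}{a_{1}^{4}}W(f,g;0-)-\tfrac{1}{a_{2}^{4}}W(f,g;0+)$ --- the latter complicated by the two distinct values $a_{1}^{4},a_{2}^{4}$ on the two sides of $0$ and by keeping the $\operatorname{sgn}\delta_{i}$ factors of $J$ in their right places. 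The remainder --- the patching needed to rule out a strictly larger adjoint domain --- is routine for Sturm-Liouville-type operators with transmission conditions and proceeds as in Theorem 2.2 of [10].
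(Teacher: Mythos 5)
The paper offers no proof of this theorem at all --- it is asserted with a bare citation of Theorem 2.2 of [10] --- so there is nothing in the text to compare your argument against; your outline (symmetry via a fourth-order Green's formula plus maximality via a Naimark-type patching argument) is the standard and correct strategy for such statements, and your Green's formula and the vanishing of $W(f,g;-1)$ under (1.2)--(1.3) are right. The difficulty is that the entire content of the symmetry step is the cancellation of the boundary concomitant against the finite-dimensional coordinates, and this is exactly the computation you defer as ``the main obstacle'' and assert was ``built precisely'' to work. It was not: if you carry it out with the paper's stated definitions, it fails. At $x=1$ the concomitant contributes $f'''\overline{g}-f''\overline{g'}+f'\overline{g''}-f\overline{g'''}$, while the coordinates $h_{1}=f(1)\mapsto -f'''(1)$, $h_{2}=f'(1)\mapsto -f''(1)$ contribute $-f'''\overline{g}-f''\overline{g'}+f'\overline{g''}+f\overline{g'''}$, leaving $2\left(f'(1)\overline{g''(1)}-f''(1)\overline{g'(1)}\right)\neq 0$: the pair $(f,f''')$ cancels but the pair $(f',f'')$, whose orientation in the symplectic form is opposite, doubles. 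The same phenomenon occurs at $x=0$, where (writing $[f'']$, $[f''']$ for the jumps and taking $\langle u,v\rangle_{\mathbb{C}_{\delta_i}}=\delta_i^{-1}u\overline{v}$, the only reading consistent with the given $J$) the $(f',f'')$ pair cancels but the $(f,f''')$ pair leaves $2\left(f(0)\overline{[g''']}-[f''']\overline{g(0)}\right)$.

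So the sign bookkeeping is not routine verification to be postponed; it is the point at which the proof either succeeds or collapses, and with the definitions as printed it collapses. (Almost certainly the paper contains sign typos --- e.g.\ in (1.7)--(1.9) or in the corresponding entries of $AF$ --- and note also that $D(A)$ as printed imposes $L_{5}f=L_{6}f=0$, which depend on $\lambda$ and cannot legitimately appear in the definition of a domain; it should read $k=\overline{1,4}$.) A complete proof must either verify the cancellations explicitly, which would force you to detect and repair these inconsistencies, or restate the operator with corrected signs before proving symmetry. Your maximality step is a sound plan but inherits the same issue, since the patching argument equates coefficients in precisely the boundary form whose structure is at stake. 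As it stands the proposal is a correct road map with the decisive step missing, and the missing step is not fillable for the operator exactly as the paper defines it.
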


\section{\textbf{Fundamental Solutions}}

\begin{lemma}
\textit{Let the real-valued function }$q\left( x\right) $\textit{\ be
continuous in }$\left[ -1,1\right] $ and $f_{i}\left( \lambda \right) $%
\textit{\ }$\left( i=1,4\right) $ \textit{are given entire functions. Then
for any }$\lambda \in 
\mathbb{C}
$\textit{\ the equation }%
\begin{equation*}
\left( a\left( x\right) u^{\prime \prime }\left( x\right) \right) ^{\prime
\prime }+q(x)u(x)=\lambda u(x),\text{ \ }x\in I
\end{equation*}%
\textit{has a unique solution }$u=u\left( x,\lambda \right) $\textit{\ such
that }%
\begin{equation*}
u\left( -1\right) =f_{1}\left( \lambda \right) \text{, }u^{^{\prime }}\left(
-1\right) =f_{2}\left( \lambda \right) \text{, }u^{\prime \prime }\left(
-1\right) =f_{3}\left( \lambda \right) \text{, }u^{\prime \prime \prime
}\left( -1\right) =f_{4}\left( \lambda \right)
\end{equation*}%
\begin{equation*}
\left( \text{or }u\left( 1\right) =f_{1}\left( \lambda \right) ,\text{ }%
u^{^{\prime }}\left( 1\right) =f_{2}\left( \lambda \right) ,\text{ }%
u^{\prime \prime }\left( 1\right) =f_{3}\left( \lambda \right) ,\text{ }%
u^{\prime \prime \prime }\left( 1\right) =f_{4}\left( \lambda \right)
\right) .
\end{equation*}%
\textit{and for each }$x\in \left[ -1,1\right] ,$\textit{\ }$u\left(
x,\lambda \right) $\textit{\ is an entire function of }$\lambda .$
\end{lemma}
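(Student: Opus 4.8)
The plan is to reduce the fourth-order equation to an equivalent first-order linear system and then invoke the classical existence-uniqueness theorem for initial value problems together with the theorem on analytic dependence on parameters. Concretely, I would set $y_1=u$, $y_2=u'$, $y_3=a(x)u''$, $y_4=(a(x)u'')'$ on each of the two subintervals $[-1,0)$ and $(0,1]$ separately (this is natural since $a(x)$ is piecewise constant but the argument works on each piece where $a$ is a positive constant). Then the equation $\left(a(x)u''\right)''+q(x)u=\lambda u$ becomes the system $y_1'=y_2$, $y_2'=a(x)^{-1}y_3$, $y_3'=y_4$, $y_4'=(\lambda-q(x))y_1$, i.e. $\mathbf{y}'=M(x,\lambda)\mathbf{y}$ with $M$ a $4\times4$ matrix whose entries are continuous in $x$ on the closed interval $[-1,1]$ (using the finite one-sided limits $q(0\pm)$ to extend $q$ continuously to each closed subinterval) and entire — in fact linear — in $\lambda$. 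The initial data $u(-1)=f_1(\lambda)$, $u'(-1)=f_2(\lambda)$, $u''(-1)=f_3(\lambda)$, $u'''(-1)=f_4(\lambda)$ translate into an initial vector $\mathbf{y}(-1)=\mathbf{c}(\lambda)=\left(f_1(\lambda),\,f_2(\lambda),\,a_1^4 f_3(\lambda),\,a_1^4 f_4(\lambda)\right)$ whose components are entire functions of $\lambda$.

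Next I would apply the standard theorem (e.g. as in Coddington–Levinson, or Naimark's treatment for Sturm–Liouville problems, which is the style used by Mukhtarov and coauthors) stating that a linear system $\mathbf{y}'=M(x,\lambda)\mathbf{y}$ with $M$ continuous in $x$ and holomorphic in the parameter $\lambda$, subject to initial conditions holomorphic in $\lambda$, has a unique solution defined on the whole interval, and that for each fixed $x$ the solution is holomorphic (here entire) in $\lambda$. Existence and uniqueness on $[-1,0)$ with the prescribed data at $x=-1$ follows immediately; since $q$ has a finite limit at $0-$, the solution and its relevant derivatives extend continuously to $x=0$. One then repeats the argument on $(0,1]$ — but note the lemma as stated only asserts a solution on $I$ with data at one endpoint, so strictly speaking each half is handled independently with its own endpoint data ($-1$ for the left piece, $1$ for the right piece in the parenthetical version), and no transmission matching is needed for this lemma. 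The entirety in $\lambda$ is obtained either by the cited analytic-dependence theorem directly, or by writing the solution via the Volterra integral equation $\mathbf{y}(x,\lambda)=\mathbf{c}(\lambda)+\int_{-1}^{x}M(t,\lambda)\mathbf{y}(t,\lambda)\,dt$ and showing the Picard iterates converge uniformly on compact $\lambda$-sets to an entire-valued limit (uniform limits of entire functions are entire by Weierstrass).

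The main technical point — mild here but worth flagging — is the passage from the system solution back to the original equation: one must check that the constructed $y_1$ is $C^3$ and that $a(x)y_1''=y_3$, $(a(x)y_1'')'=y_4$ genuinely hold, so that $y_1$ solves $\left(a(x)u''\right)''+q(x)u=\lambda u$ and realizes the prescribed values $u''(-1)=y_3(-1)/a_1^4=f_3(\lambda)$ and $u'''(-1)=y_4(-1)/a_1^4=f_4(\lambda)$; this is routine given that $a$ is a nonzero constant on each piece. Uniqueness transfers back because any two solutions of the scalar equation with the same Cauchy data at $-1$ give rise to the same initial vector, hence coincide by uniqueness for the system. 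I would then simply remark that for each $x\in[-1,1]$ the map $\lambda\mapsto u(x,\lambda)$ is entire, completing the proof; the construction is the foundational step on which the subsequent asymptotic analysis of the characteristic determinant rests.
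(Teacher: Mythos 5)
Your argument is correct and complete as a proof of the stated lemma: the quasi-derivative substitution $y_{3}=a(x)u''$, $y_{4}=(a(x)u'')'$ reduces the equation to a first-order linear system whose coefficient matrix is continuous in $x$ on each closed subinterval (after extending $q$ by its one-sided limits at $0$) and polynomial in $\lambda$; the initial vector $\bigl(f_{1},f_{2},a_{1}^{4}f_{3},a_{1}^{4}f_{4}\bigr)$ is entire; and the classical existence--uniqueness theorem together with analytic dependence on parameters (or, equivalently, uniform convergence of the Picard iterates for the Volterra equation on compact $\lambda$-sets) yields existence, uniqueness, and entirety. Your remarks about recovering the scalar equation from the system and about treating the two subintervals independently with their own endpoint data are exactly the right points to check. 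This is, however, a genuinely different route from what the paper places under this lemma: the paper's ``proof'' never actually establishes existence, uniqueness, or analyticity --- it treats the lemma as a known fact (it even appeals to ``by virtue of Lemma 3.1'' inside its own proof) and instead uses the proof environment to construct the eight fundamental solutions $\phi_{11},\phi_{12},\phi_{21},\phi_{22},\chi_{11},\chi_{12},\chi_{21},\chi_{22}$ via initial and transmission conditions, to form the Wronskians $W_{1}(\lambda)$ and $W_{2}(\lambda)$, and to define the characteristic function $W(\lambda)$. So your proposal supplies the analytic foundation that the paper implicitly outsources to standard ODE theory in the style of Naimark or Coddington--Levinson, while the paper's text carries out the construction on which the later asymptotic analysis depends; a complete account would contain both.
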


\begin{proof}
Let\textbf{\ }$\phi _{11}\left( x,\lambda \right) $ be the solution of Eq.
(1.1) on $\left[ -1,0\right) $ which satisfies the initial conditions%
\begin{gather*}
\phi _{11}\left( -1\right) =0,\text{ }\phi _{11}^{\prime }\left( -1\right)
=\phi _{11}^{\prime \prime }\left( -1\right) =0,\text{ } \\
\phi _{11}^{\prime \prime \prime }\left( -1\right) =-1.
\end{gather*}%
By virtue of Lemma 3.1, after defining this solution, we may define the
solution $\phi _{12}\left( x,\lambda \right) $ of Eq. (1.1) on $\left( 0,1%
\right] $ by means of the solution $\phi _{11}\left( x,\lambda \right) $ by
the initial conditions%
\begin{gather}
\phi _{12}\left( 0\right) =\phi _{11}\left( 0\right) ,\text{ }\phi
_{12}^{\prime }\left( 0\right) =\phi _{11}^{\prime }\left( 0\right) ,\text{ }%
\phi _{12}^{\prime \prime }\left( 0\right) =\phi _{11}^{\prime \prime
}\left( 0\right) -\lambda \delta _{1}\phi _{11}^{\prime }\left( 0\right) , 
\notag \\
\phi _{12}^{\prime \prime \prime }\left( 0\right) =\phi _{11}^{\prime \prime
\prime }\left( 0\right) -\lambda \delta _{2}\phi _{11}\left( 0\right) . 
\tag{3.1}
\end{gather}%
After defining this solution, we may define the solution $\phi _{21}\left(
x,\lambda \right) $ of Eq. (1.1) on $\left[ -1,0\right) $ which satisfies
the initial conditions%
\begin{equation}
\phi _{21}\left( -1\right) =0,\text{ }\phi _{21}^{\prime }\left( -1\right)
=\beta _{2},\text{ }\phi _{21}^{\prime \prime }\left( -1\right) =-\beta _{1},%
\text{ }\phi _{21}^{\prime \prime \prime }\left( -1\right) =0.  \tag{3.2}
\end{equation}%
After defining this solution, we may define the solution $\phi _{22}\left(
x,\lambda \right) $ of Eq. (1.1) on $\left( 0,1\right] $ by means of the
solution $\phi _{21}\left( x,\lambda \right) $ by the initial conditions%
\begin{gather}
\phi _{22}\left( 0\right) =\phi _{21}\left( 0\right) ,\text{ }\phi
_{22}^{\prime }\left( 0\right) =\phi _{21}^{\prime }\left( 0\right) ,\text{ }%
\phi _{22}^{\prime \prime }\left( 0\right) =\phi _{21}^{\prime \prime
}\left( 0\right) -\lambda \delta _{1}\phi _{21}^{\prime }\left( 0\right) , 
\notag \\
\phi _{22}^{\prime \prime \prime }\left( 0\right) =\phi _{21}^{\prime \prime
\prime }\left( 0\right) -\lambda \delta _{2}\phi _{21}\left( 0\right) . 
\tag{3.3}
\end{gather}%
Analogically we shall define the solutions $\chi _{11}\left( x,\lambda
\right) $ and $\chi _{12}\left( x,\lambda \right) $ by the initial conditions%
\begin{gather}
\chi _{12}\left( 1\right) =-1,\text{ }\chi _{12}^{\prime }\left( 1\right)
=\chi _{12}^{\prime \prime }\left( 1\right) =0,\chi _{12}^{\prime \prime
\prime }\left( 1\right) =\lambda ,\text{ }\chi _{11}\left( 0\right) =\chi
_{12}\left( 0\right) ,  \notag \\
\chi _{11}^{\prime }\left( 0\right) =\chi _{12}^{\prime }\left( 0\right) ,%
\text{ }\chi _{11}^{\prime \prime }\left( 0\right) =\chi _{12}^{\prime
\prime }\left( 0\right) +\lambda \delta _{1}\chi _{12}^{\prime }\left(
0\right) ,\text{ }  \tag{3.4} \\
\chi _{11}^{\prime \prime \prime }\left( 0\right) =\chi _{12}^{\prime \prime
\prime }\left( 0\right) +\lambda \delta _{2}\chi _{12}\left( 0\right) . 
\notag
\end{gather}

Moreover, we shall define the solutions $\chi _{21}\left( x,\lambda \right) $
and $\chi _{22}\left( x,\lambda \right) $ by the initial conditions%
\begin{gather}
\chi _{22}\left( 1\right) =0,\text{ }\chi _{22}^{\prime }\left( 1\right) =-1,%
\text{ }\chi _{22}^{\prime \prime }\left( 1\right) =\lambda ,\text{ }\chi
_{22}^{\prime \prime \prime }\left( 1\right) =0,\text{ }\chi _{21}\left(
0\right) =\chi _{22}\left( 0\right) ,  \notag \\
\chi _{21}^{\prime }\left( 0\right) =\chi _{22}^{\prime }\left( 0\right) ,%
\text{ }\chi _{21}^{\prime \prime }\left( 0\right) =\chi _{22}^{\prime
\prime }\left( 0\right) +\lambda \delta _{1}\chi _{22}^{\prime }\left(
0\right) ,\text{ }  \tag{3.5} \\
\chi _{21}^{\prime \prime \prime }\left( 0\right) =\chi _{22}^{\prime \prime
\prime }\left( 0\right) +\lambda \delta _{2}\chi _{22}\left( 0\right) . 
\notag
\end{gather}%
Let us consider the Wronskians%
\begin{equation*}
W_{1}\left( \lambda \right) :=\left\vert 
\begin{array}{cccc}
\phi _{11}\left( x,\lambda \right) & \phi _{21}\left( x,\lambda \right) & 
\chi _{11}\left( x,\lambda \right) & \chi _{21}\left( x,\lambda \right) \\ 
\phi _{11}^{\prime }\left( x,\lambda \right) & \phi _{21}^{\prime }\left(
x,\lambda \right) & \chi _{11}^{\prime }\left( x,\lambda \right) & \chi
_{21}^{\prime }\left( x,\lambda \right) \\ 
\phi _{11}^{\prime \prime }\left( x,\lambda \right) & \phi _{21}^{\prime
\prime }\left( x,\lambda \right) & \chi _{11}^{\prime \prime }\left(
x,\lambda \right) & \chi _{21}^{\prime \prime }\left( x,\lambda \right) \\ 
\phi _{11}^{\prime \prime \prime }\left( x,\lambda \right) & \phi
_{21}^{\prime \prime \prime }\left( x,\lambda \right) & \chi _{11}^{\prime
\prime \prime }\left( x,\lambda \right) & \chi _{21}^{\prime \prime \prime
}\left( x,\lambda \right)%
\end{array}%
\right\vert
\end{equation*}

and%
\begin{equation*}
W_{2}\left( \lambda\right) :=\left\vert 
\begin{array}{cccc}
\phi_{12}\left( x,\lambda\right) & \phi_{22}\left( x,\lambda\right) & 
\chi_{12}\left( x,\lambda\right) & \chi_{22}\left( x,\lambda\right) \\ 
\phi_{12}^{\prime}\left( x,\lambda\right) & \phi_{22}^{\prime}\left(
x,\lambda\right) & \chi_{12}^{\prime}\left( x,\lambda\right) & \chi
_{22}^{\prime}\left( x,\lambda\right) \\ 
\phi_{12}^{\prime\prime}\left( x,\lambda\right) & \phi_{22}^{\prime\prime
}\left( x,\lambda\right) & \chi_{12}^{\prime\prime}\left( x,\lambda\right) & 
\chi_{22}^{\prime\prime}\left( x,\lambda\right) \\ 
\phi_{12}^{\prime\prime\prime}\left( x,\lambda\right) & \phi_{22}^{\prime%
\prime\prime}\left( x,\lambda\right) & \chi_{12}^{\prime\prime \prime}\left(
x,\lambda\right) & \chi_{22}^{\prime\prime\prime}\left( x,\lambda\right)%
\end{array}
\right\vert ,
\end{equation*}
which are independent of $x$ and entire functions. This sort of calculation
gives $W_{1}\left( \lambda\right) =W_{2}\left( \lambda\right) .$ Now we may
introduce in consideration the characteristic function $W\left(
\lambda\right) $ as $W\left( \lambda\right) =W_{1}\left( \lambda\right) .$
\end{proof}

\begin{theorem}
\textit{The eigenvalues of the problem (1.1)-(1.9) are the zeros of the
function }$W\left( \lambda \right) .$
\end{theorem}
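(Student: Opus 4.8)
The plan is to exhibit $W(\lambda)$ as the characteristic determinant of the boundary value problem and to check directly that its zero set coincides with the set of eigenvalues. I would begin by recording the structural role played by the four fundamental solutions constructed above. Write $\phi_{1}:=\{\phi_{11},\phi_{12}\}$, $\phi_{2}:=\{\phi_{21},\phi_{22}\}$, $\chi_{1}:=\{\chi_{11},\chi_{12}\}$, $\chi_{2}:=\{\chi_{21},\chi_{22}\}$, regarded as piecewise-defined solutions of (1.1) on $I$. By construction the matching data (3.1) and (3.3) say exactly that $\phi_{1}$ and $\phi_{2}$ satisfy the transmission conditions (1.4)--(1.7), while the initial values at $x=-1$ give $L_{1}\phi_{j}=L_{2}\phi_{j}=0$ for $j=1,2$. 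Since $|\beta_{1}|+|\beta_{2}|\neq0$, the data $(0,0,0,-1)$ and $(0,\beta_{2},-\beta_{1},0)$ at $x=-1$ are linearly independent, so $\phi_{1},\phi_{2}$ are linearly independent. The space of solutions of (1.1) on $I$ satisfying (1.4)--(1.7) is four-dimensional (the solution on $[-1,0)$ may be prescribed freely, after which (1.4)--(1.7) determine the data at $0+$ uniquely), and the two additional linear conditions $L_{1}u=0$, $L_{2}u=0$ cut it down to a two-dimensional subspace; hence $\{\phi_{1},\phi_{2}\}$ is a basis of that subspace. Exactly the same argument, using (3.4), (3.5) and the data at $x=1$ (note the data $(-1,0,0,\lambda)$ and $(0,-1,\lambda,0)$ are linearly independent for every $\lambda$), shows that $\{\chi_{1},\chi_{2}\}$ is a basis of the two-dimensional space of solutions of (1.1) on $I$ satisfying (1.4)--(1.7) together with $L_{7}u=L_{8}u=0$ (that is, (1.8)--(1.9)).

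Next I would reduce the eigenvalue condition to a linear-dependence statement. A number $\lambda\in\mathbb{C}$ is an eigenvalue of (1.1)--(1.9) if and only if there is a nontrivial solution $u$ of (1.1) on $I$ with $L_{1}u=\cdots=L_{8}u=0$; by the previous step such a $u$ must simultaneously be of the form $c_{1}\phi_{1}+c_{2}\phi_{2}$ with $(c_{1},c_{2})\neq(0,0)$ and of the form $d_{1}\chi_{1}+d_{2}\chi_{2}$ with $(d_{1},d_{2})\neq(0,0)$. Hence $\lambda$ is an eigenvalue precisely when the two two-dimensional subspaces $\mathrm{span}\{\phi_{1},\phi_{2}\}$ and $\mathrm{span}\{\chi_{1},\chi_{2}\}$ of the four-dimensional solution space have nontrivial intersection. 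By the dimension formula this holds if and only if $\mathrm{span}\{\phi_{1},\phi_{2},\chi_{1},\chi_{2}\}$ is not all of the four-dimensional space, i.e. if and only if $\phi_{1},\phi_{2},\chi_{1},\chi_{2}$ are linearly dependent; conversely, any nontrivial dependence relation among them necessarily has $(c_{1},c_{2})\neq0$ and $(d_{1},d_{2})\neq0$ (otherwise the independence of $\chi_{1},\chi_{2}$, resp. of $\phi_{1},\phi_{2}$, would be contradicted), so it produces a genuine nonzero eigenfunction lying in both subspaces, which by Theorem~2.1 is also an eigenfunction of the operator $A$.

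It remains to identify this linear dependence with the vanishing of $W(\lambda)$. Restricted to $[-1,0)$, equation (1.1) reads $a_{1}^{4}u^{(4)}+(q-\lambda)u=0$, which has no third-order term, so by Liouville's formula the Wronskian of any four of its solutions is independent of $x$, and it vanishes at one (hence every) point of $[-1,0)$ if and only if the four solutions are linearly dependent on $[-1,0)$. The determinant $W_{1}(\lambda)$ is precisely this Wronskian for the quadruple $\phi_{11},\phi_{21},\chi_{11},\chi_{21}$, and the identity $W_{1}(\lambda)=W_{2}(\lambda)=W(\lambda)$ noted in the proof of Lemma~3.1 shows that the notion is consistent across $x=0$. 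Finally, because the right-hand pieces $\phi_{12},\phi_{22},\chi_{12},\chi_{22}$ are obtained from the left-hand ones by the \emph{invertible} linear maps (3.1), (3.3), (3.4), (3.5), a linear dependence among $\phi_{11},\phi_{21},\chi_{11},\chi_{21}$ on $[-1,0)$ transports to the identical relation among $\phi_{12},\phi_{22},\chi_{12},\chi_{22}$ on $(0,1]$, and conversely. Thus $W(\lambda)=0$ if and only if $\phi_{1},\phi_{2},\chi_{1},\chi_{2}$ are linearly dependent on all of $I$, which by the previous paragraph is equivalent to $\lambda$ being an eigenvalue. This establishes the theorem.

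I expect the only genuinely delicate point to be the bookkeeping at the discontinuity: one must use that the transmission conditions (1.4)--(1.7) are homogeneous and linear, so that linear combinations of admissible solutions remain admissible and the transition matrix at $x=0$ is invertible — this is exactly what makes $W_{1}=W_{2}$ and legitimizes treating $W$ as the Wronskian of the four fundamental solutions on the whole of $I$. Everything else is the classical equivalence, for a linear ODE, between the vanishing of a Wronskian and linear dependence of its solutions, together with the elementary dimension count above.
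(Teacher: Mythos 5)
Your argument is correct and arrives at the same statement, but by a genuinely different route in one direction. The paper splits the proof into two implications: for $W(\lambda)=0$ it argues, much as you do, that a dependence relation $k_{1}\phi_{11}+k_{2}\phi_{21}+k_{3}\chi_{11}+k_{4}\chi_{21}=0$ produces the eigenfunction $k_{3}\chi_{1}+k_{4}\chi_{2}$; for the converse it assumes $W(\lambda)\neq 0$, expands a putative eigenfunction in all eight pieces $\phi_{ij},\chi_{ij}$ with coefficients $c_{1},\dots ,c_{8}$, and shows that the $8\times 8$ determinant of the linear system $L_{\nu}u=0$ ($\nu=\overline{1,8}$) equals $-W(\lambda)^{3}\neq 0$, forcing all $c_{i}=0$. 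You replace that determinant computation by the observation that $\mathrm{span}\{\phi_{1},\phi_{2}\}$ and $\mathrm{span}\{\chi_{1},\chi_{2}\}$ are two $2$-dimensional subspaces of the $4$-dimensional space of solutions satisfying the transmission conditions, so they meet nontrivially exactly when the four solutions are linearly dependent, i.e.\ exactly when the (constant, by Liouville's formula, since the equation has no third-order term) Wronskian $W_{1}=W_{2}$ vanishes; this yields both implications simultaneously. Your version is more economical --- it avoids the identity $\det =-W^{3}$, which the paper asserts without derivation --- and it also closes a small gap in the paper's first direction, namely the need to rule out $k_{3}=k_{4}=0$ before declaring $k_{3}\chi_{1}+k_{4}\chi_{2}$ a nontrivial eigenfunction, which you handle via the independence of $\phi_{1},\phi_{2}$. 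The one thing the paper's computation buys in exchange is an explicit relation between the full $8\times 8$ characteristic determinant and $W^{3}$, which could in principle be reused elsewhere; your argument does not produce that identity, but it is not needed for the theorem.
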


\begin{proof}
Let $W\left( \lambda \right) =0$. Then the functions $\phi _{11}\left(
x,\lambda \right) ,$ $\phi _{21}\left( x,\lambda \right) $ and $\chi
_{11}\left( x,\lambda \right) ,$ $\chi _{21}\left( x,\lambda \right) $ are
linearly dependent, i.e.,%
\begin{equation*}
k_{1}\phi _{11}\left( x,\lambda \right) +k_{2}\phi _{21}\left( x,\lambda
\right) +k_{3}\chi _{11}\left( x,\lambda \right) +k_{4}\chi _{21}\left(
x,\lambda \right) =0
\end{equation*}%
for some $k_{1}\neq 0$ or $k_{2}\neq 0$ or $k_{3}\neq 0$ or $k_{4}\neq 0$.
From this, it follows that $k_{3}\chi _{11}\left( x,\lambda \right)
+k_{4}\chi _{21}\left( x,\lambda \right) $ satisfies the boundary conditions
(1.2)-(1.3). Therefore%
\begin{equation*}
\left\{ 
\begin{array}{c}
k_{3}\chi _{11}\left( x,\lambda \right) +k_{4}\chi _{21}\left( x,\lambda
\right) ,\text{ \ }x\in \left[ -1,0\right) , \\ 
k_{3}\chi _{12}\left( x,\lambda \right) +k_{4}\chi _{22}\left( x,\lambda
\right) ,\text{ \ }x\in \left( 0,1\right]%
\end{array}%
\right.
\end{equation*}%
is an eigenfunction of the problem (1.1)-(1.9) corresponding to eigenvalue $%
\lambda .$

Now we let $u\left( x\right) $ be any eigenfunction corresponding to
eigenvalue $\lambda $, but $W\left( \lambda \right) \neq 0$. Then the
functions $\phi _{11},$ $\phi _{21},$ $\chi _{11},$ $\chi _{21}$ would be
linearly independent on $\left( 0,1\right] .$ Therefore $u\left( x\right) $
may be represented as%
\begin{equation*}
u\left( x\right) =\left\{ 
\begin{array}{c}
c_{1}\phi _{11}\left( x,\lambda \right) +c_{2}\phi _{21}\left( x,\lambda
\right) +c_{3}\chi _{11}\left( x,\lambda \right) +c_{4}\chi _{21}\left(
x,\lambda \right) ,\text{ \ }x\in \left[ -1,0\right) ; \\ 
c_{5}\phi _{12}\left( x,\lambda \right) +c_{6}\phi _{22}\left( x,\lambda
\right) +c_{7}\chi _{12}\left( x,\lambda \right) +c_{8}\chi _{22}\left(
x,\lambda \right) ,\text{ \ }x\in \left( 0,1\right] ,%
\end{array}%
\right.
\end{equation*}%
where at least one of the constants $c_{1},$ $c_{2},$ $c_{3},$ $c_{4},$ $%
c_{5},$ $c_{6},$ $c_{7}$ and $c_{8}$ is not zero. Considering the equations%
\begin{equation}
L_{\upsilon }\left( u\left( x\right) \right) =0\text{, \ }\upsilon =%
\overline{1,8}  \tag{3.6}
\end{equation}%
as a system of linear equations of the variables $c_{1},$ $c_{2},$ $c_{3},$ $%
c_{4},$ $c_{5},$ $c_{6},$ $c_{7},$ $c_{8}$ and taking (3.1)-(3.5) into
account, it follows that the determinant of this system is%
\begin{gather*}
\left\vert 
\begin{array}{cccccccc}
0 & 0 & L_{1}\chi _{11} & L_{1}\chi _{21} & 0 & 0 & 0 & 0 \\ 
0 & 0 & L_{2}\chi _{11} & L_{2}\chi _{21} & 0 & 0 & 0 & 0 \\ 
0 & 0 & 0 & 0 & L_{3}\phi _{12} & L_{3}\phi _{22} & 0 & 0 \\ 
0 & 0 & 0 & 0 & L_{4}\phi _{12} & L_{4}\phi _{22} & 0 & 0 \\ 
-\phi _{12}\left( 0\right) & -\phi _{22}\left( 0\right) & -\chi _{12}\left(
0\right) & -\chi _{22}\left( 0\right) & \phi _{12}\left( 0\right) & \phi
_{22}\left( 0\right) & \chi _{12}\left( 0\right) & \chi _{22}\left( 0\right)
\\ 
-\phi _{12}^{\prime }\left( 0\right) & -\phi _{22}^{\prime }\left( 0\right)
& -\chi _{12}^{\prime }\left( 0\right) & -\chi _{22}^{\prime }\left( 0\right)
& \phi _{12}^{\prime }\left( 0\right) & \phi _{22}^{\prime }\left( 0\right)
& \chi _{12}^{\prime }\left( 0\right) & \chi _{22}^{\prime }\left( 0\right)
\\ 
-\phi _{12}^{\prime \prime }\left( 0\right) & -\phi _{22}^{\prime \prime
}\left( 0\right) & -\chi _{12}^{\prime \prime }\left( 0\right) & -\chi
_{22}^{\prime \prime }\left( 0\right) & \phi _{12}^{\prime \prime }\left(
0\right) & \phi _{22}^{\prime \prime }\left( 0\right) & \chi _{12}^{\prime
\prime }\left( 0\right) & \chi _{22}^{\prime \prime }\left( 0\right) \\ 
-\phi _{12}^{\prime \prime \prime }\left( 0\right) & -\phi _{22}^{\prime
\prime \prime }\left( 0\right) & -\chi _{12}^{\prime \prime \prime }\left(
0\right) & -\chi _{22}^{\prime \prime \prime }\left( 0\right) & \phi
_{12}^{\prime \prime \prime }\left( 0\right) & \phi _{22}^{\prime \prime
\prime }\left( 0\right) & \chi _{12}^{\prime \prime \prime }\left( 0\right)
& \chi _{22}^{\prime \prime \prime }\left( 0\right)%
\end{array}%
\right\vert \\
=-W\left( \lambda \right) ^{3}\neq 0\text{.}
\end{gather*}%
Therefore, the system (3.6) has only the trivial solution $c_{i}=0$ $\left(
i=\overline{1,8}\right) $. Thus we get a contradiction, which completes the
proof.
\end{proof}

\section{\textbf{Asymptotic formulae for eigenvalues and fundamental
solutions}}

We start by proving some lemmas.

\begin{lemma}
\textit{Let }$\phi \left( x,\lambda \right) $\textit{\ be the solution of
Eq. (1.1) defined in Section 3, and let }$\lambda =s^{4}$, $s=\sigma +it$%
\textit{. Then the following integral equations hold for }$k=\overline{0,3}:$%
\begin{gather}
\frac{d^{k}}{dx^{k}}\phi _{11}\left( x,\lambda \right)  \notag \\
=\frac{a_{1}^{3}}{2s^{3}}\frac{d^{k}}{dx^{k}}\sin \frac{s\left( x+1\right) }{%
a_{1}}+\frac{a_{1}^{3}}{4s^{3}}\frac{d^{k}}{dx^{k}}e^{\frac{s\left(
x+1\right) }{a_{1}}}+\frac{a_{1}^{3}}{4s^{3}}\frac{d^{k}}{dx^{k}}e^{-\frac{%
s\left( x+1\right) }{a_{1}}}  \notag \\
+\frac{a_{1}^{3}}{2s^{3}}\int_{-1}^{x}\frac{d^{k}}{dx^{k}}\left( \sin \frac{%
s\left( x-y\right) }{a_{1}}-e^{\frac{s\left( x-y\right) }{a_{1}}}+e^{-\frac{%
s\left( x-y\right) }{a_{1}}}\right) q\left( y\right) \phi _{11}\left(
y,\lambda \right) dy.  \tag{4.1}
\end{gather}%
\begin{gather}
\frac{d^{k}}{dx^{k}}\phi _{12}\left( x,\lambda \right)  \notag \\
=\left( \frac{\phi _{12}\left( 0\right) }{2}-\frac{a_{2}^{2}\phi
_{12}^{\prime \prime }\left( 0\right) }{2s^{2}}\right) \frac{d^{k}}{dx^{k}}%
\cos \frac{sx}{a_{2}}+\left( \frac{a_{2}\phi _{12}^{\prime }\left( 0\right) 
}{2s}-\frac{a_{2}^{3}\phi _{12}^{\prime \prime \prime }\left( 0\right) }{%
2s^{3}}\right)  \notag \\
\times \frac{d^{k}}{dx^{k}}\sin \frac{sx}{a_{2}}+\left( \frac{\phi
_{12}\left( 0\right) }{4}+\frac{a_{2}\phi _{12}^{\prime }\left( 0\right) }{4s%
}+\frac{a_{2}^{2}\phi _{12}^{\prime \prime }\left( 0\right) }{4s^{2}}+\frac{%
a_{2}^{3}\phi _{12}^{\prime \prime \prime }\left( 0\right) }{4s^{3}}\right) 
\notag \\
\times \frac{d^{k}}{dx^{k}}e^{\frac{sx}{a_{2}}}+\left( \frac{\phi
_{12}\left( 0\right) }{4}-\frac{a_{2}\phi _{12}^{\prime }\left( 0\right) }{4s%
}+\frac{a_{2}^{2}\phi _{12}^{\prime \prime }\left( 0\right) }{4s^{2}}-\frac{%
a_{2}^{3}\phi _{12}^{\prime \prime \prime }\left( 0\right) }{4s^{3}}\right) 
\frac{d^{k}}{dx^{k}}e^{-\frac{sx}{a_{2}}}  \notag \\
+\frac{a_{2}^{3}}{2s^{3}}\int_{0}^{x}\frac{d^{k}}{dx^{k}}\left( \sin \frac{%
s\left( x-y\right) }{a_{2}}-e^{\frac{s\left( x-y\right) }{a_{2}}}+e^{-\frac{%
s\left( x-y\right) }{a_{2}}}\right) q\left( y\right) \phi _{12}\left(
y,\lambda \right) dy.  \tag{4.2}
\end{gather}%
\begin{gather}
\frac{d^{k}}{dx^{k}}\phi _{21}\left( x,\lambda \right)  \notag \\
=\frac{\beta _{1}a_{1}^{2}}{2s^{2}}\frac{d^{k}}{dx^{k}}\cos \frac{s\left(
x+1\right) }{a_{1}}+\frac{\beta _{2}a_{1}}{2s}\frac{d^{k}}{dx^{k}}\sin \frac{%
s\left( x+1\right) }{a_{1}}  \notag \\
+\left( \frac{\beta _{2}a_{1}}{4s}-\frac{\beta _{1}a_{1}^{2}}{4s^{2}}\right) 
\frac{d^{k}}{dx^{k}}e^{\frac{s\left( x+1\right) }{a_{1}}}-\left( \frac{\beta
_{2}a_{1}}{4s}+\frac{\beta _{1}a_{1}^{2}}{4s^{2}}\right) \frac{d^{k}}{dx^{k}}%
e^{-\frac{s\left( x+1\right) }{a_{1}}}  \notag \\
+\frac{a_{1}^{3}}{2s^{3}}\int_{-1}^{x}\frac{d^{k}}{dx^{k}}\left( \sin \frac{%
s\left( x-y\right) }{a_{1}}-e^{\frac{s\left( x-y\right) }{a_{1}}}+e^{-\frac{%
s\left( x-y\right) }{a_{1}}}\right) q\left( y\right) \phi _{21}\left(
y,\lambda \right) dy.  \tag{4.3}
\end{gather}%
\begin{gather}
\frac{d^{k}}{dx^{k}}\phi _{22}\left( x,\lambda \right)  \notag \\
=\left( \frac{\phi _{22}\left( 0\right) }{2}-\frac{a_{2}^{2}\phi
_{22}^{\prime \prime }\left( 0\right) }{2s^{2}}\right) \frac{d^{k}}{dx^{k}}%
\cos \frac{sx}{a_{2}}+\left( \frac{a_{2}\phi _{22}^{\prime }\left( 0\right) 
}{2s}-\frac{a_{2}^{3}\phi _{22}^{\prime \prime \prime }\left( 0\right) }{%
2s^{3}}\right)  \notag \\
\times \frac{d^{k}}{dx^{k}}\sin \frac{sx}{a_{2}}+\left( \frac{\phi
_{22}\left( 0\right) }{4}+\frac{a_{2}\phi _{22}^{\prime }\left( 0\right) }{4s%
}+\frac{a_{2}^{2}\phi _{22}^{\prime \prime }\left( 0\right) }{4s^{2}}+\frac{%
a_{2}^{3}\phi _{22}^{\prime \prime \prime }\left( 0\right) }{4s^{3}}\right) 
\notag \\
\times \frac{d^{k}}{dx^{k}}e^{\frac{sx}{a_{2}}}+\left( \frac{\phi
_{22}\left( 0\right) }{4}-\frac{a_{2}\phi _{22}^{\prime }\left( 0\right) }{4s%
}+\frac{a_{2}^{2}\phi _{22}^{\prime \prime }\left( 0\right) }{4s^{2}}-\frac{%
a_{2}^{3}\phi _{22}^{\prime \prime \prime }\left( 0\right) }{4s^{3}}\right) 
\frac{d^{k}}{dx^{k}}e^{-\frac{sx}{a_{2}}}  \notag \\
+\frac{a_{2}^{3}}{2s^{3}}\int_{0}^{x}\frac{d^{k}}{dx^{k}}\left( \sin \frac{%
s\left( x-y\right) }{a_{2}}-e^{\frac{s\left( x-y\right) }{a_{2}}}+e^{-\frac{%
s\left( x-y\right) }{a_{2}}}\right) q\left( y\right) \phi _{22}\left(
y,\lambda \right) dy\text{.}  \tag{4.4}
\end{gather}
\end{lemma}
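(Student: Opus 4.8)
The plan is to derive each integral equation (4.1)--(4.4) by the method of variation of parameters applied to the fourth-order equation written in the form $(a(x)u'')'' = \lambda u - q(x)u$, treating the right-hand side $-q(x)u$ as an inhomogeneous perturbation. On the interval $[-1,0)$, where $a(x)=a_1^4$, the equation becomes $u^{(4)} = (s^4/a_1^4)u - (1/a_1^4)q(x)u$, so the homogeneous operator $u^{(4)} - (s/a_1)^4 u$ has characteristic roots $\pm s/a_1$ and $\pm i s/a_1$; hence a fundamental system is $\{\cos\tfrac{s(x+1)}{a_1},\ \sin\tfrac{s(x+1)}{a_1},\ e^{s(x+1)/a_1},\ e^{-s(x+1)/a_1}\}$ (shifted to base point $x=-1$). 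I would first record the Cauchy (Green's) kernel $K(x,y)$ for this fourth-order constant-coefficient operator: the solution of $v^{(4)}-(s/a_1)^4 v = g$ with zero initial data at the base point is $v(x)=\int K(x,y)g(y)\,dy$, where $K(x,y)=\tfrac{a_1^3}{2s^3}\bigl(\sin\tfrac{s(x-y)}{a_1} - e^{s(x-y)/a_1} + e^{-s(x-y)/a_1}\bigr)$ up to the normalization; one checks this by verifying $K$ and its first two $x$-derivatives vanish at $y=x$ while the third equals $1$ (times $1/a_1^4$ after accounting for the leading coefficient). This is exactly the kernel appearing in the integral terms of all four formulas, which confirms the normalization.

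Next I would fix the constants multiplying the four homogeneous solutions in each case by matching the prescribed initial data. For $\phi_{11}$ the data are $\phi_{11}(-1)=\phi_{11}'(-1)=\phi_{11}''(-1)=0$, $\phi_{11}'''(-1)=-1$; expanding the four basis functions and their first three derivatives at $x=-1$ and solving the resulting $4\times4$ linear system (which is just the inverse of a Vandermonde-type matrix in the roots $\pm 1,\pm i$ scaled by powers of $s/a_1$) yields the coefficients $\tfrac{a_1^3}{2s^3}$ in front of $\sin$ and $\tfrac{a_1^3}{4s^3}$ in front of each exponential, matching (4.1). For $\phi_{21}$ one does the same with the data (3.2), namely $\phi_{21}(-1)=0$, $\phi_{21}'(-1)=\beta_2$, $\phi_{21}''(-1)=-\beta_1$, $\phi_{21}'''(-1)=0$, producing the $\beta_1,\beta_2$-dependent coefficients in (4.3). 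For the pieces on $(0,1]$, where $a(x)=a_2^4$, the base point is $x=0$ and the four solutions are taken as $\cos\tfrac{sx}{a_2},\sin\tfrac{sx}{a_2},e^{\pm sx/a_2}$; the coefficients are determined by the Cauchy data $\phi_{12}(0),\phi_{12}'(0),\phi_{12}''(0),\phi_{12}'''(0)$ (for $\phi_{12}$) and $\phi_{22}(0),\dots$ (for $\phi_{22}$), which after inverting the same $4\times4$ matrix give precisely the bracketed combinations in (4.2) and (4.4). Then differentiate $k$ times under the integral sign (legitimate since the integrand is smooth in $x$ and the kernel together with its first three $x$-derivatives is continuous, the third having a unit jump that is harmless after one more derivative is not needed for $k\le 3$) to obtain the stated form for all $k=\overline{0,3}$.

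Finally I would note that these are genuine integral equations for $\phi_{i j}$ (the unknown appears under the integral), and that by the standard Volterra iteration argument — the kernel is bounded on the compact interval and the iteration converges uniformly on compacta in $\lambda$ — they have unique solutions, consistent with the existence-uniqueness already granted by Lemma 3.1; so the content of the lemma is really the explicit representation, not existence. The main obstacle, and the only place real care is needed, is bookkeeping: getting the eight scalar coefficients in (4.2) and (4.4) correct requires accurately inverting the $4\times4$ coefficient matrix $\bigl[r_j^{\,m}\bigr]_{m=0,\dots,3}$ with $r_j\in\{1,-1,i,-i\}\cdot(s/a_2)$ and tracking the powers of $s$ and $a_2$; I would organize this by first writing $u$ on $(0,1]$ in the hyperbolic/trigonometric basis $\{\cosh,\sinh,\cos,\sin\}$ (where the coefficient matrix is block-diagonal and trivially inverted), reading off the coefficients in terms of $\phi_{ij}^{(m)}(0)$, and only then re-expanding $\cosh,\sinh$ as $\tfrac12(e^{sx/a_2}\pm e^{-sx/a_2})$ to land on the exact shape printed in (4.2)--(4.4). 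The analogous computation at base point $x=-1$ for (4.1) and (4.3) is the same with $a_1$ in place of $a_2$ and a shift $x\mapsto x+1$.
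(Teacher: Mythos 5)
Your argument is essentially the paper's own proof: the paper likewise regards each $\phi_{ij}$ as the solution of a non-homogeneous Cauchy problem with right-hand side $q(x)\phi_{ij}(x,\lambda)$, applies ``the method of constant changing'' (variation of constants) with exactly the kernel you identify, fixes the free constants by the initial data of Section 3, and then differentiates $k$ times. You have merely made explicit the inversion of the $4\times 4$ initial-value system that the paper leaves implicit, so there is nothing further to report.
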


\begin{proof}
Regard $\phi _{11}\left( x,\lambda \right) $ as the solution of the
following non-homogeneous Cauchy problem:%
\begin{equation*}
\left\{ 
\begin{array}{c}
-\left( a\left( x\right) u^{\prime \prime }\left( x\right) \right) ^{\prime
\prime }-s^{4}u\left( x\right) =q\left( x\right) \phi _{11}\left( x,\lambda
\right) , \\ 
\phi _{11}\left( -1,\lambda \right) =1,\text{ }\phi _{11}^{\prime }\left(
-1,\lambda \right) =0, \\ 
\phi _{11}^{\prime \prime }\left( -1,\lambda \right) =0,\text{ }\phi
_{11}^{\prime \prime \prime }\left( -1,\lambda \right) =0.%
\end{array}%
\right.
\end{equation*}%
Using the method of constant changing, $\phi _{11}\left( x,\lambda \right) $
satisfies%
\begin{gather}
\phi _{11}\left( x,\lambda \right) =\frac{a_{1}^{3}}{2s^{3}}\sin \frac{%
s\left( x+1\right) }{a_{1}}+\frac{a_{1}^{3}}{4s^{3}}e^{\frac{s\left(
x+1\right) }{a_{1}}}+\frac{a_{1}^{3}}{4s^{3}}e^{-\frac{s\left( x+1\right) }{%
a_{1}}}  \tag{4.1} \\
+\frac{a_{1}^{3}}{2s^{3}}\int_{-1}^{x}\left( \sin \frac{s\left( x-y\right) }{%
a_{1}}-e^{\frac{s\left( x-y\right) }{a_{1}}}+e^{-\frac{s\left( x-y\right) }{%
a_{1}}}\right) q\left( y\right) \phi _{11}\left( y,\lambda \right) dy. 
\notag
\end{gather}%
Then differentiating it with respect to $x$, we have $\left( 4.1\right) .$
The proof for $\left( 4.2\right) ,$ $\left( 4.3\right) $ and $\left(
4.4\right) $ is similar.
\end{proof}

\begin{lemma}
\textit{Let }$\lambda =s^{4},$ $s=\sigma +it$\textit{. Then the following
integral equations hold for }$k=\overline{0,3}:$%
\begin{equation}
\frac{d^{k}}{dx^{k}}\phi _{11}\left( x,\lambda \right) =O\left( \left\vert
s\right\vert ^{k-1}e^{\left\vert s\right\vert \frac{\left( x+1\right) }{a_{1}%
}}\right) \text{.}  \tag{4.5}
\end{equation}%
\begin{gather}
\frac{d^{k}}{dx^{k}}\phi _{12}\left( x,\lambda \right)  \notag \\
=\frac{a_{2}^{2}s^{2}\delta _{1}\phi _{11}^{\prime }\left( 0\right) }{2}%
\frac{d^{k}}{dx^{k}}\cos \frac{sx}{a_{2}}+\frac{a_{2}^{3}s\delta _{2}\phi
_{11}\left( 0\right) }{2}\frac{d^{k}}{dx^{k}}\sin \frac{sx}{a_{2}}  \notag \\
-\frac{a_{2}^{2}s^{2}\delta _{1}\phi _{11}^{\prime }\left( 0\right) }{4}%
\frac{d^{k}}{dx^{k}}\left( e^{\frac{sx}{a_{2}}}+e^{-\frac{sx}{a_{2}}}\right)
-\frac{a_{2}^{3}s\delta _{2}\phi _{11}\left( 0\right) }{4}\frac{d^{k}}{dx^{k}%
}\left( e^{\frac{sx}{a_{2}}}-e^{-\frac{sx}{a_{2}}}\right)  \tag{4.6} \\
+O\left( e^{\left\vert s\right\vert ^{k}\left( \frac{a_{1}x+a_{2}}{a_{1}a_{2}%
}\right) }\right) \text{.}  \notag
\end{gather}
\end{lemma}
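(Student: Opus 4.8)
The idea is to run Gronwall-type estimates on the integral equations supplied by Lemma~4.1, regarding $\lambda=s^{4}$ as a large parameter ($|s|\to\infty$) and keeping track of which terms dominate. First I would establish (4.5). The three ``free'' terms in (4.1) have the form $c\,\frac{d^{k}}{dx^{k}}E(s(x+1)/a_{1})$ with $c=O(|s|^{-3})$ and $E(z)$ one of $\sin z,\ e^{z},\ e^{-z}$; since $x+1\ge 0$ on $I$ every such $E$ satisfies $|E(s(x+1)/a_{1})|\le e^{|s|(x+1)/a_{1}}$ (the argument being a real multiple of $s$), and each $x$-differentiation costs a factor $O(|s|)$, so these terms are $O(|s|^{k-3}e^{|s|(x+1)/a_{1}})$. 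Likewise the kernel $\sin\frac{s(x-y)}{a_{1}}-e^{s(x-y)/a_{1}}+e^{-s(x-y)/a_{1}}$ and its $x$-derivatives are $O(|s|^{k}e^{|s|(x-y)/a_{1}})$ for $-1\le y\le x$, so together with the prefactor $a_{1}^{3}/(2s^{3})$ the integral term in (4.1) is $O(|s|^{k-3}\int_{-1}^{x}e^{|s|(x-y)/a_{1}}|q(y)|\,|\phi_{11}(y,\lambda)|\,dy)$. Taking $k=0$, putting $F(x):=|\phi_{11}(x,\lambda)|\,e^{-|s|(x+1)/a_{1}}$, and using $e^{|s|(x-y)/a_{1}}e^{-|s|(x+1)/a_{1}}=e^{-|s|(y+1)/a_{1}}$, one gets $F(x)\le C|s|^{-3}+C|s|^{-3}\int_{-1}^{x}|q(y)|F(y)\,dy$, whence $F(x)=O(|s|^{-3})$ by Gronwall's inequality. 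Re-inserting $\phi_{11}(y,\lambda)=O(|s|^{-3}e^{|s|(y+1)/a_{1}})$ into the integral term makes it $O(|s|^{k-6}e^{|s|(x+1)/a_{1}})$, so $\frac{d^{k}}{dx^{k}}\phi_{11}(x,\lambda)=O(|s|^{k-3}e^{|s|(x+1)/a_{1}})$, which in particular gives (4.5).

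For (4.6) the new ingredient is that $\phi_{12}$ is launched from $x=0$ through the transmission relations (3.1), which carry the factor $\lambda=s^{4}$. Evaluating the bound just proved at $x=0$ gives $\phi_{11}^{(j)}(0)=O(|s|^{j-3}e^{|s|/a_{1}})$ for $j=0,1,2,3$, and then (3.1) yields $\phi_{12}(0)=\phi_{11}(0)$, $\phi_{12}'(0)=\phi_{11}'(0)$, $\phi_{12}''(0)=\phi_{11}''(0)-s^{4}\delta_{1}\phi_{11}'(0)$, $\phi_{12}'''(0)=\phi_{11}'''(0)-s^{4}\delta_{2}\phi_{11}(0)$, the last two being dominated by their $s^{4}$-terms. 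Substituting these four numbers into the representation (4.2): the coefficient $-a_{2}^{2}\phi_{12}''(0)/(2s^{2})$ of $\frac{d^{k}}{dx^{k}}\cos\frac{sx}{a_{2}}$ produces the summand $\tfrac12 a_{2}^{2}s^{2}\delta_{1}\phi_{11}'(0)$, the coefficient $-a_{2}^{3}\phi_{12}'''(0)/(2s^{3})$ of $\frac{d^{k}}{dx^{k}}\sin\frac{sx}{a_{2}}$ produces $\tfrac12 a_{2}^{3}s\,\delta_{2}\phi_{11}(0)$, and in the same way the coefficients of $\frac{d^{k}}{dx^{k}}e^{\pm sx/a_{2}}$ produce the remaining $\delta_{i}$-summands shown in (4.6). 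These account precisely for the explicit part of (4.6).

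It then remains to absorb the rest of (4.2) into the error term: the residual boundary contributions $\phi_{12}(0)/2$, $a_{2}\phi_{12}'(0)/(2s)$, $-a_{2}^{2}\phi_{11}''(0)/(2s^{2})$, $-a_{2}^{3}\phi_{11}'''(0)/(2s^{3})$ together with their $e^{\pm sx/a_{2}}$ analogues, and the integral term. On $(0,1]$ one has $|E(sx/a_{2})|\le e^{|s|x/a_{2}}$ and $e^{|s|/a_{1}}e^{|s|x/a_{2}}=e^{|s|(a_{1}x+a_{2})/(a_{1}a_{2})}$; each residual boundary coefficient carries at least one more negative power of $|s|$ than the displayed ones, so those terms are $O(|s|^{k}e^{|s|(a_{1}x+a_{2})/(a_{1}a_{2})})$. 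For the integral term I would rerun the Gronwall argument on (4.2) with the weight $e^{-|s|(a_{1}x+a_{2})/(a_{1}a_{2})}$: by the substitutions above the free part of (4.2) is $O(e^{|s|(a_{1}x+a_{2})/(a_{1}a_{2})})$, the kernel contributes $O(|s|^{k}e^{|s|(x-y)/a_{2}})$, and Gronwall gives $\phi_{12}(x,\lambda)=O(e^{|s|(a_{1}x+a_{2})/(a_{1}a_{2})})$; feeding this back bounds the integral term and its $x$-derivatives by $O(|s|^{k-3}e^{|s|(a_{1}x+a_{2})/(a_{1}a_{2})})$, which is absorbed.

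The step I expect to be the real obstacle is the bookkeeping in the second paragraph: since the factor $\lambda=s^{4}$ in (3.1) competes against the $s^{-2}$ and $s^{-3}$ prefactors in (4.2), one must verify term by term that the $\delta_{i}$-contributions genuinely survive as the leading behaviour of $\phi_{12}$ while all $q$-dependent pieces and all residual boundary pieces stay strictly lower order --- which becomes routine only once the sharp bound $\phi_{11}^{(j)}(0)=O(|s|^{j-3}e^{|s|/a_{1}})$ from the first step is in hand. The analogous estimates for $\phi_{21},\phi_{22}$ and, later, for $\chi_{11},\chi_{12},\chi_{21},\chi_{22}$ are obtained in exactly the same way, starting from (4.3)--(4.4).
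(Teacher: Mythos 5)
Your proposal follows essentially the same route as the paper: a weighted, Gronwall-type estimate on the Volterra integral equations of Lemma 4.1 to bound $\phi_{11}$ (the paper's ``$F_{11}$ is bounded'' step), substitution of the transmission conditions carrying the factor $\lambda=s^{4}$ to identify the leading terms of $\phi_{12}$, and a second weighted estimate with the weight $e^{-\left\vert s\right\vert \left( a_{1}x+a_{2}\right) /\left( a_{1}a_{2}\right) }$ to control the integral term before feeding it back. Your bookkeeping is in fact slightly sharper --- you obtain $\frac{d^{k}}{dx^{k}}\phi _{11}\left( x,\lambda \right) =O\left( \left\vert s\right\vert ^{k-3}e^{\left\vert s\right\vert \left( x+1\right) /a_{1}}\right) $, which implies the paper's (4.5) --- but the argument is the same.
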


\begin{gather*}
\frac{d^{k}}{dx^{k}}\phi _{21}\left( x,\lambda \right) \\
=\frac{\beta _{2}a_{1}}{2s}\frac{d^{k}}{dx^{k}}\sin \frac{s\left( x+1\right) 
}{a_{1}}+\frac{\beta _{2}a_{1}}{4s}\frac{d^{k}}{dx^{k}}\left( e^{\frac{%
s\left( x+1\right) }{a_{1}}}-e^{-\frac{s\left( x+1\right) }{a_{1}}}\right)
+O\left( \left\vert s\right\vert ^{k-2}e^{\left\vert s\right\vert \frac{x+1}{%
a_{1}}}\right) \text{.} \\
\frac{d^{k}}{dx^{k}}\phi _{22}\left( x,\lambda \right) \\
=\frac{a_{2}^{2}s^{2}\delta _{1}\phi _{21}^{\prime }\left( 0\right) }{2}%
\frac{d^{k}}{dx^{k}}\cos \frac{sx}{a_{2}}+\frac{a_{2}^{3}s\delta _{2}\phi
_{21}\left( 0\right) }{2}\frac{d^{k}}{dx^{k}}\sin \frac{sx}{a_{2}} \\
-\frac{a_{2}^{2}s^{2}\delta _{1}\phi _{21}^{\prime }\left( 0\right) }{4}%
\frac{d^{k}}{dx^{k}}\left( e^{\frac{sx}{a_{2}}}+e^{-\frac{sx}{a_{2}}}\right)
-\frac{a_{2}^{3}s\delta _{2}\phi _{21}\left( 0\right) }{4}\frac{d^{k}}{dx^{k}%
}\left( e^{\frac{sx}{a_{2}}}-e^{-\frac{sx}{a_{2}}}\right) \\
+O\left( e^{\left\vert s\right\vert ^{k-1}\left( \frac{a_{1}x+a_{2}}{%
a_{1}a_{2}}\right) }\right) .
\end{gather*}%
\textit{Each of these asymptotic formulae holds uniformly for }$x$\textit{\
as }$\left\vert \lambda \right\vert \rightarrow \infty .$

\begin{proof}
Let 
\begin{equation*}
F_{11}\left( x,\lambda \right) =e^{-\left\vert s\right\vert \frac{x+1}{a_{1}}%
}\phi _{11}\left( x,\lambda \right) \text{.}
\end{equation*}
It is easy to see that $F_{11}\left( x,\lambda \right) $ is bounded.
Therefore $\phi _{11}\left( x,\lambda \right) =O\left( e\right) .$
Substituting it into $(4.1)$ and differentiating it with respect to $x$ for $%
k=\overline{0,3},$ we obtain $(4.5).$ According to transmission conditions
(1.4)-(1.7) as $\left\vert \lambda \right\vert \rightarrow \infty ,$ we get%
\begin{gather*}
\phi _{12}\left( 0\right) =\phi _{11}\left( 0\right) ,\text{ }\phi
_{12}^{\prime }\left( 0\right) =\phi _{11}^{\prime }\left( 0\right) ,\text{ }%
\phi _{12}^{\prime \prime }\left( 0\right) =-s^{4}\delta _{1}\phi
_{11}^{\prime }\left( 0\right) ,\text{ } \\
\phi _{12}^{\prime \prime \prime }\left( 0\right) =-s^{4}\delta _{2}\phi
_{11}\left( 0\right) .
\end{gather*}%
Substituting these asymptotic formulae into $(4.2)$ for $k=0,$ we obtain%
\begin{gather}
\phi _{12}\left( x,\lambda \right) =\frac{a_{2}^{2}s^{2}\delta _{1}\phi
_{11}^{\prime }\left( 0\right) }{2}\cos \frac{sx}{a_{2}}+\frac{%
a_{2}^{3}s\delta _{2}\phi _{11}\left( 0\right) }{2}\sin \frac{sx}{a_{2}} 
\notag \\
-\frac{a_{2}^{2}s^{2}\delta _{1}\phi _{11}^{\prime }\left( 0\right) }{4}%
\left( e^{\frac{sx}{a_{2}}}+e^{-\frac{sx}{a_{2}}}\right) -\frac{%
a_{2}^{3}s\delta _{2}\phi _{11}\left( 0\right) }{4}\left( e^{\frac{sx}{a_{2}}%
}-e^{-\frac{sx}{a_{2}}}\right)  \notag \\
+\frac{a_{2}^{3}}{2s^{3}}\int_{0}^{x}\left( \sin \frac{s\left( x-y\right) }{%
a_{2}}-e^{\frac{s\left( x-y\right) }{a_{2}}}+e^{-\frac{s\left( x-y\right) }{%
a_{2}}}\right) q\left( y\right) \phi _{12}\left( y,\lambda \right) dy  \notag
\\
+O\left( e^{\left\vert s\right\vert \left( \frac{a_{1}x+a_{2}}{a_{1}a_{2}}%
\right) }\right) .  \tag{4.7}
\end{gather}%
Multiplying through by $\left\vert s\right\vert ^{-3}e^{-\left\vert
s\right\vert \left( \frac{a_{1}x+a_{2}}{a_{1}a_{2}}\right) },$ and denoting%
\begin{equation*}
F_{12}\left( x,\lambda \right) :=O\left( \left\vert s\right\vert
^{-3}e^{-\left\vert s\right\vert \left( \frac{a_{1}x+a_{2}}{a_{1}a_{2}}%
\right) }\right) \phi _{12}\left( x,\lambda \right) .
\end{equation*}%
Denoting $M:=\max_{x\in \left[ 0,1\right] }\left\vert F_{12}\left( x,\lambda
\right) \right\vert $ from the last formula, it follows that%
\begin{equation*}
M\left( \lambda \right) \leq \frac{M\left( \lambda \right) }{2\left\vert
s\right\vert ^{3}}\int_{0}^{x}q\left( y\right) dy+M_{0}
\end{equation*}%
for some $M_{0}>0$. From this, it follows that $M\left( \lambda \right)
=O\left( 1\right) $ as $\left\vert \lambda \right\vert \rightarrow \infty $,
so%
\begin{equation*}
\phi _{12}\left( x,\lambda \right) =O\left( \left\vert s\right\vert
^{3}e^{\left\vert s\right\vert \left( \frac{a_{1}x+a_{2}}{a_{1}a_{2}}\right)
}\right) .
\end{equation*}%
Substituting this back into the integral on the right side of $(4.7$) yields 
$\left( 4.6\right) $ for $k=0$. The other cases may be considered
analogically.
\end{proof}

Similarly one can establish the following lemma. for $\chi_{ij}\left(
x,\lambda\right) $ $\left( i=1,2,j=1,2\right) .$

\begin{lemma}
\textit{Let }$\lambda =s^{4}$\textit{, }$s=\sigma +it$\textit{. Then the
following integral equations hold for }$k=\overline{0,3}:$%
\begin{gather*}
\frac{d^{k}}{dx^{k}}\chi _{11}\left( x,\lambda \right) \\
=-\frac{a_{1}^{2}s^{2}\delta _{1}\chi _{12}^{\prime }\left( 0\right) }{2}%
\frac{d^{k}}{dx^{k}}\cos \frac{sx}{a_{1}}+\frac{a_{1}^{3}s\delta _{2}\chi
_{12}\left( 0\right) }{2}\frac{d^{k}}{dx^{k}}\sin \frac{sx}{a_{1}} \\
+\frac{a_{1}^{2}s^{2}\delta _{1}\chi _{12}^{\prime }\left( 0\right) }{4}%
\frac{d^{k}}{dx^{k}}\left( e^{\frac{sx}{a_{1}}}+e^{-\frac{sx}{a_{1}}}\right)
\\
+\frac{a_{1}^{3}s\delta _{2}\chi _{12}\left( 0\right) }{4}\frac{d^{k}}{dx^{k}%
}\left( e^{\frac{sx}{a_{1}}}-e^{-\frac{sx}{a_{1}}}\right) +O\left(
\left\vert s\right\vert ^{k+1}e^{\left\vert s\right\vert \left( \frac{%
a_{1}-a_{2}x}{a_{1}a_{2}}\right) }\right) \text{.} \\
\frac{d^{k}}{dx^{k}}\chi _{12}\left( x,\lambda \right) \\
=-\frac{a_{2}^{3}s}{2}\frac{d^{k}}{dx^{k}}\sin \frac{s\left( x-1\right) }{%
a_{2}}+\frac{a_{1}^{3}s\delta _{2}}{4}\frac{d^{k}}{dx^{k}}\left( e^{\frac{%
s\left( x-1\right) }{a_{2}}}-e^{-\frac{s\left( x-1\right) }{a_{2}}}\right)
+O\left( \left\vert s\right\vert ^{k+1}e^{\left\vert s\right\vert \frac{%
\left( 1-x\right) }{a_{2}}}\right) . \\
\frac{d^{k}}{dx^{k}}\chi _{21}\left( x,\lambda \right) \\
=-\frac{a_{1}^{2}s^{2}\delta _{1}\chi _{22}^{\prime }\left( 0\right) }{2}%
\frac{d^{k}}{dx^{k}}\cos \frac{sx}{a_{1}}+\frac{a_{1}^{3}s\delta _{2}\chi
_{22}\left( 0\right) }{2}\frac{d^{k}}{dx^{k}}\sin \frac{sx}{a_{1}} \\
+\frac{a_{1}^{2}s^{2}\delta _{1}\chi _{22}^{\prime }\left( 0\right) }{4}%
\frac{d^{k}}{dx^{k}}\left( e^{\frac{sx}{a_{1}}}+e^{-\frac{sx}{a_{1}}}\right)
+\frac{a_{1}^{3}s\delta _{2}\chi _{22}\left( 0\right) }{4}\frac{d^{k}}{dx^{k}%
}\left( e^{\frac{sx}{a_{1}}}-e^{-\frac{sx}{a_{1}}}\right) \\
+O\left( \left\vert s\right\vert ^{k+2}e^{\left\vert s\right\vert \left( 
\frac{a_{1}-a_{2}x}{a_{1}a_{2}}\right) }\right) \text{.} \\
\frac{d^{k}}{dx^{k}}\chi _{22}\left( x,\lambda \right) =-\frac{a_{2}^{2}s^{2}%
}{2}\frac{d^{k}}{dx^{k}}\cos \frac{s\left( x-1\right) }{a_{2}}+\frac{%
a_{2}^{2}s^{2}}{4}\frac{d^{k}}{dx^{k}}\left( e^{\frac{s\left( x-1\right) }{%
a_{1}}}-e^{-\frac{s\left( x-1\right) }{a_{1}}}\right) \\
+O\left( \left\vert s\right\vert ^{k+1}e^{\left\vert s\right\vert \frac{%
\left( 1-x\right) }{a_{2}}}\right) \text{,}
\end{gather*}%
\textit{where }$k=\overline{0,3}.$\textit{\ Each of these asymptotic
formulae holds uniformly for }$x.$
\end{lemma}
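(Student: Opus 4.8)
The plan is to mirror, almost verbatim, the arguments of Lemmas 4.1 and 4.2, but working from the right endpoint $x=1$ leftwards. First I would record the variation-of-parameters representations for the $\chi_{ij}$ analogous to (4.1)--(4.4). On $(0,1]$ the homogeneous equation $(a_2^{4}u'')''=s^{4}u$ has fundamental system $\{\cos\frac{sx}{a_2},\ \sin\frac{sx}{a_2},\ e^{sx/a_2},\ e^{-sx/a_2}\}$, and the initial data (3.4) at $x=1$ for $\chi_{12}$ give, by the method of constant changing,
\[
\chi_{12}(x,\lambda)=P_{12}(x,\lambda)+\frac{a_2^{3}}{2s^{3}}\int_{1}^{x}\left(\sin\frac{s(x-y)}{a_2}-e^{\frac{s(x-y)}{a_2}}+e^{-\frac{s(x-y)}{a_2}}\right)q(y)\chi_{12}(y,\lambda)\,dy,
\]
where $P_{12}$ is the linear combination of the four solutions above whose coefficients are determined by $\chi_{12}^{(i)}(1)$; the inhomogeneous datum $\chi_{12}'''(1)=\lambda=s^{4}$ is what forces the exponential factor $e^{|s|(1-x)/a_2}$. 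The same is done for $\chi_{22}$ (with $\chi_{22}''(1)=\lambda$), and then the transmission relations (3.4), (3.5) supply the Cauchy data at $x=0$ for $\chi_{11}$, $\chi_{21}$, yielding for these the analogous representations on $[-1,0)$ with integrals $\int_{0}^{x}$ and kernel built from $\cos\frac{sx}{a_1}$, $\sin\frac{sx}{a_1}$, $e^{\pm sx/a_1}$. Differentiating each identity $k$ times, $k=\overline{0,3}$, gives the integral equations for $\frac{d^{k}}{dx^{k}}\chi_{ij}$, i.e. the $\chi$-analogue of Lemma 4.1.

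Second comes the a priori growth estimate. For $\chi_{12}$ I would set $G_{12}(x,\lambda):=|s|^{-1}e^{-|s|(1-x)/a_2}\chi_{12}(x,\lambda)$, note that $|s|^{-1}e^{-|s|(1-x)/a_2}P_{12}(x,\lambda)=O(1)$, and estimate the integral term using $|\sin z|,|e^{\pm z}|\le e^{|z|}$ together with $|e^{s(x-y)/a_2}|\le e^{|s|(x-y)/a_2}$ for $y$ between $x$ and $1$, so that its exponential is absorbed by the normalization. With $M(\lambda):=\max_{[0,1]}|G_{12}(\cdot,\lambda)|$ this produces $M(\lambda)\le \frac{M(\lambda)}{2|s|^{3}}\int_{0}^{1}|q(y)|\,dy+M_{0}$ for some $M_0>0$, hence $M(\lambda)=O(1)$ and $\chi_{12}(x,\lambda)=O(|s|e^{|s|(1-x)/a_2})$. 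Inserting this bound back into the integral term of the representation (which then contributes $O(|s|^{k+1-3}e^{|s|(1-x)/a_2})$ after $k$ differentiations) yields the stated formula for $\frac{d^{k}}{dx^{k}}\chi_{12}$. The computation for $\chi_{22}$ is identical.

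Third, the estimates on $[-1,0)$. Substituting the asymptotics just obtained for $\chi_{12}(0),\chi_{12}'(0),\chi_{12}''(0),\chi_{12}'''(0)$ into (3.4), I would keep the dominant terms $\chi_{11}''(0)\sim s^{4}\delta_1\chi_{12}'(0)$, $\chi_{11}'''(0)\sim s^{4}\delta_2\chi_{12}(0)$, while $\chi_{11}(0)=\chi_{12}(0)$ and $\chi_{11}'(0)=\chi_{12}'(0)$ are of strictly lower order. Feeding these into the $a_1$-version of the representation for $\chi_{11}$ with $k=0$, normalizing by the suitable power of $|s|$ times $e^{-|s|(a_1-a_2x)/(a_1a_2)}$, and repeating the same fixed-point/Gronwall bound on the $\int_{0}^{x}$ term, one obtains the two displayed leading terms together with the $O(|s|^{k+1}e^{|s|(a_1-a_2x)/(a_1a_2)})$ remainder after $k$ differentiations; $\chi_{21}$ is treated the same way starting from $\chi_{22}$. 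Uniformity in $x$ is automatic since every constant in the estimates depends only on $\int_{-1}^{1}|q|$, the $a_i$, and the $\beta_i,\delta_i$.

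The main obstacle is the exponent bookkeeping across the interior point $x=0$: the Cauchy data for $\chi_{11},\chi_{21}$ at $0$ already carry the factor $e^{|s|/a_2}$ inherited from $\chi_{12},\chi_{22}$ and are further amplified by $\lambda=s^{4}$ through (3.4)--(3.5), so one must verify carefully that, after multiplication by the $a_1$-solutions $\cos\frac{sx}{a_1}$, $e^{\pm sx/a_1}$ and after the normalization used in the Gronwall step, the integral term is genuinely of lower order than the free part on the whole of $[-1,0)$, uniformly in $x$; once this is checked, everything else is a routine repetition of the computations in Lemmas 4.1 and 4.2.
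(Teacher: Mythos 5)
Your proposal is correct and takes essentially the same approach the paper intends: the paper offers no proof of this lemma at all, remarking only that it is established ``similarly'' to Lemmas 4.1 and 4.2, and your argument is precisely that mirror-image construction (variation of parameters from the initial data at $x=1$, a Gronwall-type a priori bound after normalizing by the appropriate power of $|s|$ times the exponential, then transporting the Cauchy data across $x=0$ via (3.4)--(3.5) and repeating the estimate on $[-1,0)$). As a minor aside, your bookkeeping actually yields a sharper remainder for the integral term (order $|s|^{k-2}$ rather than the stated $|s|^{k+1}$), and a few coefficients in the printed statement (e.g.\ $a_{1}^{3}\delta _{2}$ in the $\chi _{12}$ formula) appear to be typographical rather than what the computation produces.
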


\begin{theorem}
\textit{Let }$\lambda =s^{4},$\textit{\ }$s=\sigma +it$\textit{. Then the
characteristic functions }$W_{i}\left( \lambda \right) $ $\left(
i=1,2\right) $\textit{\ have the following asymptotic formulae:}%
\begin{equation*}
W_{1}\left( \lambda \right) =W_{2}\left( \lambda \right) =O\left( \left\vert
s\right\vert ^{11}e^{2\left\vert s\right\vert \left( \frac{a_{1}+a_{2}}{%
a_{1}a_{2}}\right) }\right) \text{.}
\end{equation*}
\end{theorem}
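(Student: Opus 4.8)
The plan is to exploit the $x$-independence of the characteristic function proved in Section~3: since $W(\lambda)=W_1(\lambda)=W_2(\lambda)$ and the determinant $W_1(\lambda)$ does not depend on $x$, we may evaluate it at the single point $x=-1$. There the column built from $\phi_{11}$ collapses to $(0,0,0,\phi_{11}'''(-1))^{\mathrm T}$, by the initial conditions defining $\phi_{11}$ in Section~3, and the column built from $\phi_{21}$ collapses to $(0,\beta_2,-\beta_1,0)^{\mathrm T}$, by (3.2); both are nonzero constant vectors (recall $|\beta_1|+|\beta_2|\neq0$). A Laplace expansion of the $4\times4$ determinant along these two sparse columns then reduces $W_1(\lambda)$, up to a fixed nonzero constant, to the $2\times2$ determinant
\[
\det\begin{pmatrix} L_1\chi_{11} & L_1\chi_{21}\\ L_2\chi_{11} & L_2\chi_{21}\end{pmatrix}
=\chi_{11}(-1)\bigl(\beta_1\chi_{21}'(-1)+\beta_2\chi_{21}''(-1)\bigr)-\chi_{21}(-1)\bigl(\beta_1\chi_{11}'(-1)+\beta_2\chi_{11}''(-1)\bigr),
\]
i.e.\ to the boundary forms $L_1,L_2$ at $x=-1$ applied to the solutions $\chi_{11},\chi_{21}$ on $[-1,0)$.

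Next I would read off from Lemma~4.3 the orders of $\chi_{11}^{(k)}(-1)$ and $\chi_{21}^{(k)}(-1)$ for $k=\overline{0,3}$. In the asymptotic formula for $\chi_{11}^{(k)}$ the leading coefficients are of order $|s|^{2}|\chi_{12}'(0)|$ and $|s|\,|\chi_{12}(0)|$, the accompanying factors $\frac{d^k}{dx^k}\cos\frac{sx}{a_1}$, $\frac{d^k}{dx^k}\sin\frac{sx}{a_1}$, $\frac{d^k}{dx^k}e^{\pm sx/a_1}$ are $O(|s|^{k}e^{|s|/a_1})$ at $x=-1$, and the stated remainder is of strictly smaller order there; likewise for $\chi_{21}^{(k)}$ with $\chi_{22}(0),\chi_{22}'(0)$ in place of $\chi_{12}(0),\chi_{12}'(0)$. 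The $\chi_{12}$- and $\chi_{22}$-formulas of Lemma~4.3 evaluated at $x=0$ give $\chi_{12}(0)=O(|s|e^{|s|/a_2})$, $\chi_{12}'(0)=O(|s|^{2}e^{|s|/a_2})$, $\chi_{22}(0)=O(|s|^{2}e^{|s|/a_2})$, $\chi_{22}'(0)=O(|s|^{3}e^{|s|/a_2})$, so, using $|\cos z|,|\cosh z|,|e^{\pm z}|\le e^{|z|}$,
\[
\chi_{11}^{(k)}(-1)=O\bigl(|s|^{k+4}e^{|s|(a_1+a_2)/(a_1a_2)}\bigr),\qquad \chi_{21}^{(k)}(-1)=O\bigl(|s|^{k+5}e^{|s|(a_1+a_2)/(a_1a_2)}\bigr),\qquad k=\overline{0,3}.
\]

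Finally I would substitute these into the reduced determinant. With $E:=|s|(a_1+a_2)/(a_1a_2)$ one gets $L_1\chi_{11}=\chi_{11}(-1)=O(|s|^{4}e^{E})$, $L_1\chi_{21}=\chi_{21}(-1)=O(|s|^{5}e^{E})$, $L_2\chi_{11}=\beta_1\chi_{11}'(-1)+\beta_2\chi_{11}''(-1)=O(|s|^{6}e^{E})$ and $L_2\chi_{21}=\beta_1\chi_{21}'(-1)+\beta_2\chi_{21}''(-1)=O(|s|^{7}e^{E})$; hence each of the two products $(L_1\chi_{11})(L_2\chi_{21})$ and $(L_2\chi_{11})(L_1\chi_{21})$ is $O(|s|^{11}e^{2E})$, so $W_1(\lambda)=O(|s|^{11}e^{2|s|(a_1+a_2)/(a_1a_2)})$, and $W_2(\lambda)$ obeys the same bound since $W_2(\lambda)=W_1(\lambda)$ by the last remark of Section~3 (one could instead run the symmetric computation on $W_2(\lambda)$ at $x=1$ using the data (3.4),(3.5)).

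The step that needs real care — the main obstacle — is the bookkeeping of the powers of $s$ that the transmission conditions (1.6)--(1.7) inject: each multiplies $\chi_{12},\chi_{12}'$ (resp.\ $\chi_{22},\chi_{22}'$) at $0$ by $\lambda\delta_i=s^4\delta_i$, and one must verify that these $\lambda$-factors, combined with the $s^{-3}$-type normalizations coming from the variation-of-constants representations of Lemma~4.1, produce exactly the exponents $|s|^{k+4}$ and $|s|^{k+5}$ used above, and then check that the remainder terms in Lemma~4.3 are genuinely dominated at $x=-1$. The exponential rate $e^{2E}$ is forced: each of the two $\chi$-columns carries growth $e^{|s|/a_1}$ on $(-1,0)$ together with a factor $e^{|s|/a_2}$ picked up through the transition at $x=0$.
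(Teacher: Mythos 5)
Your proposal is correct and follows essentially the same route as the paper: evaluate $W_1(\lambda)$ at $x=-1$, where the $\phi_{11}$- and $\phi_{21}$-columns reduce to the constant vectors from the initial conditions, and substitute the asymptotics of Lemma~4.3 for $\chi_{11}^{(k)}(-1)$ and $\chi_{21}^{(k)}(-1)$; your order count $|s|^{4}\cdot|s|^{7}=|s|^{6}\cdot|s|^{5}=|s|^{11}$ with rate $e^{2|s|(a_1+a_2)/(a_1a_2)}$ is exactly what the claimed bound requires. If anything, your explicit Laplace expansion and power bookkeeping are cleaner than the paper's displayed computation, which contains evident typographical slips (an error term written as $O(|s|^{15}\cdots)$ and a spurious ``$=0$'' at the end).
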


\begin{proof}
Substituting the asymptotic equalities $\frac{d^{k}}{dx^{k}}\chi _{11}\left(
-1,\lambda \right) $ and $\frac{d^{k}}{dx^{k}}\chi _{21}\left( -1,\lambda
\right) $ into the representation of $W_{1}\left( \lambda \right) ,$ we get%
\begin{gather*}
W_{1}\left( \lambda \right) \\
=\left\vert 
\begin{array}{cccc}
0 & 0 & \chi _{11}\left( -1,\lambda \right) & \chi _{21}\left( -1,\lambda
\right) \\ 
0 & \beta _{2} & \chi _{11}^{\prime }\left( -1,\lambda \right) & \chi
_{21}^{\prime }\left( -1,\lambda \right) \\ 
0 & -\beta _{1} & \chi _{11}^{\prime \prime }\left( -1,\lambda \right) & 
\chi _{21}^{\prime \prime }\left( -1,\lambda \right) \\ 
-1 & 0 & \chi _{11}^{\prime \prime \prime }\left( -1,\lambda \right) & \chi
_{21}^{\prime \prime \prime }\left( -1,\lambda \right)%
\end{array}%
\right\vert \\
=\frac{a_{1}^{5}\delta _{1}\delta _{2}s^{3}}{8}\left( \chi _{12}^{\prime
}\left( 0\right) \chi _{22}\left( 0\right) -\chi _{12}\left( 0\right) \chi
_{22}^{\prime }\left( 0\right) \right) \\
\times \left( \left\vert 
\begin{array}{cccc}
0 & 0 & \cos \frac{s}{a_{1}} & e^{-\frac{s}{a_{1}}}-e^{\frac{s}{a_{1}}} \\ 
0 & \beta _{2} & -\frac{s}{a_{1}}\sin \frac{s}{a_{1}} & \frac{s}{a_{1}}%
\left( -e^{-\frac{s}{a_{1}}}-e^{\frac{s}{a_{1}}}\right) \\ 
0 & -\beta _{1} & -\frac{s^{2}}{a_{1}^{2}}\cos \frac{s}{a_{1}} & \frac{s^{2}%
}{a_{1}^{2}}\left( e^{\frac{s}{a_{1}}}-e^{\frac{s}{a_{1}}}\right) \\ 
-1 & 0 & -\frac{s^{3}}{a_{1}^{3}}\sin \frac{s}{a_{1}} & \frac{s^{3}}{%
a_{1}^{3}}\left( -e^{-\frac{s}{a_{1}}}-e^{\frac{s}{a_{1}}}\right)%
\end{array}%
\right\vert \right) \\
+\left\vert 
\begin{array}{cccc}
1 & 0 & \sin \frac{s}{a_{1}} & e^{-\frac{s}{a_{1}}}+e^{\frac{s}{a_{1}}} \\ 
0 & 0 & \frac{s}{a_{1}}\cos \frac{s}{a_{1}} & s\left( -e^{-\frac{s}{a_{1}}%
}+e^{\frac{s}{a_{1}}}\right) \\ 
0 & -1 & -\frac{s^{2}}{a_{1}^{2}}\sin \frac{s}{a_{1}} & s^{2}\left( e^{\frac{%
s}{a_{1}}}+e^{\frac{s}{a_{1}}}\right) \\ 
0 & 0 & -\frac{s^{3}}{a_{1}^{3}}\sin \frac{s}{a_{1}} & s^{3}\left( -e^{-%
\frac{s}{a_{1}}}+e^{\frac{s}{a_{1}}}\right)%
\end{array}%
\right\vert \\
+O\left( \left\vert s\right\vert ^{15}e^{2\left\vert s\right\vert \left( 
\frac{a_{1}+a_{2}}{a_{1}a_{2}}\right) }\right) \\
=0.
\end{gather*}%
Analogically, we can obtain the asymptotic formulae of $W_{2}\left( \lambda
\right) .$
\end{proof}

\begin{corollary}
\textit{The real eigenvalues of the problem (1.1)-(1.9) are bounded below.}
\end{corollary}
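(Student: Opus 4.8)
The plan is to derive the bound from the self-adjointness of $A$ together with a Green-type identity for its quadratic form. Let $\lambda$ be a real eigenvalue and let $F=(u,\,u(1),\,u'(1),\,-\delta_{1}u'(0),\,-\delta_{2}u(0))\in D(A)$ be an associated eigenelement, $AF=\lambda F$; by Theorem 2.1, $u$ is a corresponding eigenfunction of (1.1)--(1.9). Since $A$ is self-adjoint (Theorem 2.2), $[AF,F]$ is real and equals $\lambda[F,F]$. I would compute $[AF,F]$ from the definitions of $A$ and of $[\cdot,\cdot]$, integrating $\int_{-1}^{0}(a(x)u'')''\bar u\,dx$ and $\int_{0}^{1}(a(x)u'')''\bar u\,dx$ by parts twice on each subinterval. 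The decisive fact is that the boundary terms thus produced at $x=0$ and $x=1$ are exactly the ones which, through the transmission conditions (1.6)--(1.7) and the eigen-dependent conditions (1.8)--(1.9), reproduce the $\mathbb{C}\oplus\mathbb{C}\oplus\mathbb{C}_{\delta_{1}}\oplus\mathbb{C}_{\delta_{2}}$ part of $\lambda[F,F]$; after they are cancelled against $\lambda[F,F]$ one is left with an identity of the form
\begin{equation*}
\lambda[F,F]=\int_{-1}^{0}|u''|^{2}\,dx+\int_{0}^{1}|u''|^{2}\,dx+\frac{1}{a_{1}^{4}}\int_{-1}^{0}q|u|^{2}\,dx+\frac{1}{a_{2}^{4}}\int_{0}^{1}q|u|^{2}\,dx+R(u),
\end{equation*}
in which $R(u)$ is a boundary remainder supported at $x=-1$ that, by (1.2)--(1.3), reduces to $-(\beta_{1}/\beta_{2})|u'(-1)|^{2}$ when $\beta_{2}\neq0$ and to the analogous single term when $\beta_{2}=0$.

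The next step is to estimate the right-hand side from below. The first two integrals are non-negative; since $q$ is bounded on $[-1,1]$ the two $q$-integrals are at least $-c_{0}\langle u,u\rangle_{1}$ with $c_{0}=\max(a_{1}^{4},a_{2}^{4})\sup_{[-1,1]}|q|$; and $R(u)$ is controlled by $|R(u)|\le c_{1}|u'(-1)|^{2}\le\varepsilon\bigl(\int_{-1}^{0}|u''|^{2}\,dx+\int_{0}^{1}|u''|^{2}\,dx\bigr)+C_{\varepsilon}\langle u,u\rangle_{1}$ for any $\varepsilon\in(0,1)$, using a trace/interpolation inequality on $[-1,0]$. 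Combining these estimates gives $\lambda[F,F]\ge-C\langle u,u\rangle_{1}\ge-C[F,F]$, with $C$ depending only on $a_{1},a_{2},\beta_{1},\beta_{2}$ and $\sup|q|$, provided $[F,F]>0$; hence $\lambda\ge-C$. Since the real eigenvalues are isolated (they are the zeros of $W$ by Theorem 3.2), this shows they cannot accumulate at $-\infty$, i.e.\ they are bounded below.

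I expect the main obstacle to be the bookkeeping in the Green identity: one must track the jumps $u''(0+)-u''(0-)$ and $u'''(0+)-u'''(0-)$ and verify that every $\lambda$-dependent contribution generated at $x=0$ and $x=1$ matches precisely a term of the $\mathbb{C}$- and $\mathbb{C}_{\delta_{i}}$-components of $\lambda[F,F]$, since only then does the surviving right-hand side become $\lambda$-free. A second point is that $[\cdot,\cdot]$ is indefinite in general (Krein space $Z$), so the final division by $[F,F]$ is legitimate only when $[F,F]>0$; when the parameters are such that the relevant form is positive definite (for instance $\delta_{1},\delta_{2}>0$) there is nothing more to do, and in the genuinely indefinite case one adds the standard remark that the eigenelements with $[F,F]\le0$ span a finite-dimensional subspace, so only finitely many real eigenvalues escape the above estimate and the lower bound survives.

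A shorter alternative, closer to the preceding section, is to sharpen Theorem 4.4 on the negative real axis: for $\lambda\to-\infty$ write $\lambda=s^{4}$ with $s=|\lambda|^{1/4}e^{i\pi/4}$, so $\sigma=t=|\lambda|^{1/4}/\sqrt{2}\to+\infty$, and extract from the determinant in the proof of Theorem 4.4 the dominant term $W(\lambda)=C\,s^{11}e^{2|s|(a_{1}+a_{2})/(a_{1}a_{2})}(1+o(1))$; if $C\neq0$ then $W(\lambda)\neq0$ for $\lambda<-R$, and since $W$ has only isolated zeros no real eigenvalue lies below $-R$. The obstacle in this route is the explicit check that the coefficient $C$ of the leading exponential is non-zero.
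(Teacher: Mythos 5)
Your ``shorter alternative'' is in fact the paper's own proof: the authors set $s^{2}=it^{2}$ (so that $\lambda=s^{4}=-t^{4}$ is negative) in the asymptotic representation of $W$ from Theorem 4.4, assert that $W\rightarrow\infty$ along the negative real axis, and conclude that $W(\lambda)\neq0$ for $\lambda$ negative and large in modulus. You correctly identify the weak point of that route: Theorem 4.4 as stated only gives an upper bound $W(\lambda)=O\left(\left\vert s\right\vert^{11}e^{2\left\vert s\right\vert(a_{1}+a_{2})/(a_{1}a_{2})}\right)$, so one must exhibit a non-vanishing leading term before concluding $W\neq0$; the paper does not carry out that check either, so on this point your sketch is no less complete than the original. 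Your primary route, via self-adjointness of $A$ and a Green-type identity for the quadratic form, is genuinely different from the paper's and, properly completed, is the stronger argument: it produces an explicit lower bound $\lambda\geq-C$ rather than mere non-accumulation at $-\infty$, and it does not depend on the delicate determinant asymptotics. Two caveats on that route. First, the bookkeeping you defer at $x=0$ and $x=1$ is not incidental: the cancellation of every $\lambda$-dependent boundary term against the $\mathbb{C}\oplus\mathbb{C}\oplus\mathbb{C}_{\delta_{1}}\oplus\mathbb{C}_{\delta_{2}}$ components of $\lambda\left[F,F\right]$ is precisely the content of the construction of $A$, and it must be verified, not assumed. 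Second, since $Z$ is only a Krein space (a Pontryagin space with at most two negative squares, contributed by $\mathbb{C}_{\delta_{1}}\oplus\mathbb{C}_{\delta_{2}}$ when $\delta_{1}$ or $\delta_{2}$ is negative), the division by $\left[F,F\right]$ can fail; your closing remark that the eigenelements with $\left[F,F\right]\leq0$ span a finite-dimensional subspace is exactly what rescues the argument, but it is a theorem about self-adjoint operators in Pontryagin spaces and should be invoked as such rather than left as a ``standard remark.'' Either approach, carried to completion, proves the corollary; the paper's is the asymptotic one, and your quadratic-form argument would actually strengthen it.
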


\begin{proof}
Putting $s^{2}=it^{2}$ $\left( t>0\right) $ in the above formulas, it
follows that 
\begin{equation*}
W\left( -t^{2}\right) \rightarrow \infty \text{ \textit{as} }t\rightarrow
\infty .
\end{equation*}
Therefore, $W\left( \lambda \right) \neq 0$ for $\lambda $ negative and
sufficiently large in modulus.
\end{proof}

\begin{corollary}
\textit{The non-real eigenvalues of the problem (1.1)-(1.9) are bounded
below and above.}
\end{corollary}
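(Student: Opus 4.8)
The plan is to combine the conjugate‑pair symmetry of the spectrum with the finite negative index of the Krein space $Z$; a purely analytic alternative, in the spirit of the preceding corollary, is indicated at the end. First observe that the nonreal eigenvalues occur in conjugate pairs: since $a(x)$, $q(x)$, $\beta_{i}$ and $\delta_{i}$ are real, the functions $\overline{\phi_{ij}(x,\overline\lambda)}$ and $\overline{\chi_{ij}(x,\overline\lambda)}$ satisfy the same initial value problems as $\phi_{ij}(x,\lambda)$ and $\chi_{ij}(x,\lambda)$, so $\overline{W(\lambda)}=W(\overline\lambda)$; by Theorem 3.2 (eigenvalues $=$ zeros of $W$) the nonreal spectrum is symmetric about $\mathbb{R}$, and it suffices to bound the nonreal eigenvalues lying in the open upper half‑plane.

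Now I would use Theorem 2.2: $A$ is self‑adjoint in $Z=L^{2}(I)\oplus\mathbb{C}\oplus\mathbb{C}\oplus\mathbb{C}_{\delta_{1}}\oplus\mathbb{C}_{\delta_{2}}$. Because $L^{2}(I)$ (with $J_{0}=\mathrm{id}$) and the two copies of $\mathbb{C}$ are positive, the whole indefiniteness of $Z$ is carried by the one‑dimensional summands $\mathbb{C}_{\delta_{1}}$, $\mathbb{C}_{\delta_{2}}$, so $Z$ is a Pontryagin space of negative index $\kappa=\#\{i:\delta_{i}<0\}\le 2$. If $F$ is an eigenelement for a nonreal eigenvalue $\lambda$, then $\lambda[F,F]=[AF,F]=[F,AF]=\overline\lambda[F,F]$, hence $[F,F]=0$; and for eigenelements $F_{\lambda},F_{\mu}$ with $\mu\neq\overline\lambda$ the same computation gives $[F_{\lambda},F_{\mu}]=0$. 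Therefore the span of one eigenelement chosen for each nonreal eigenvalue in the open upper half‑plane is a neutral subspace of $Z$; its dimension, which equals the number of such eigenvalues (eigenelements for distinct eigenvalues being linearly independent), is at most $\kappa$. Together with the conjugate pairs this shows the problem (1.1)--(1.9) has at most $2\kappa\le 4$ nonreal eigenvalues, and a finite set is bounded below and above.

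For an analytic proof one would instead push the method of Corollary 4.5 further, using Theorem 4.4: writing $\lambda=s^{4}$ and letting $s$ tend to infinity along a ray $\arg s=\theta$ for which $4\theta$ is not a multiple of $2\pi$ (so $\lambda$ stays off the positive real axis), one keeps in the Wronskian representation of $W_{1}(\lambda)$ the dominant product of exponentials $e^{\pm s/a_{1}},e^{\pm s/a_{2}}$; along such a ray these have pairwise distinct nonzero real parts, no cancellation occurs, and $|W(\lambda)|$ grows like $|s|^{p}e^{c(\theta)|s|}$ with $c(\theta)>0$, so $W(\lambda)\neq 0$ for $|\lambda|$ large in every closed sector avoiding the positive real axis; a finite cover of the closed upper half‑plane minus a neighbourhood of $\mathbb{R}$ then confines the nonreal eigenvalues to a bounded region. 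The main obstacle in this second route is that Theorem 4.4 records only an $O$‑upper bound for $W$, so one must redo the Wronskian computation keeping the leading exponential term and its coefficient and verify it does not vanish identically along the chosen rays — a genuinely longer estimate than the one used for Corollary 4.5. For that reason I would give the Pontryagin‑space argument as the proof; its only delicate point is the explicit identification of the index $\kappa$ of $Z$ from the form of $J$, together with the standard fact that neutral subspaces of $\Pi_{\kappa}$ have dimension at most $\kappa$.
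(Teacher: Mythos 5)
The paper states this corollary without any proof at all --- it follows Corollary 4.5 with no argument attached --- so there is nothing in the source to match your proposal against; what can be said is that your first (Pontryagin-space) argument is sound and is the natural way to fill the gap. The conjugate-symmetry of the spectrum follows from the reality of $a$, $q$, $\beta_i$, $\delta_i$ exactly as you say; self-adjointness of $A$ in $Z$ (the paper's Theorem 2.2) gives $[F,F]=0$ for an eigenelement at a non-real $\lambda$ and $[F_\lambda,F_\mu]=0$ when $\lambda\neq\overline{\mu}$, so the eigenelements attached to the non-real eigenvalues in the open upper half-plane span a neutral subspace; and since $L^2(I)$ carries the positive weights $a_i^{-4}$ and the only sign indefiniteness in $[\cdot,\cdot]$ sits in the two one-dimensional summands $\mathbb{C}_{\delta_1},\mathbb{C}_{\delta_2}$ (as the fundamental symmetry $J$ with entries $\mathrm{sgn}\,\delta_i$ confirms), $Z$ is a Pontryagin space with negative index $\kappa\le 2$ and the neutral subspace has dimension at most $\kappa$. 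This yields the strictly stronger conclusion that there are at most $2\kappa\le 4$ non-real eigenvalues, from which boundedness is immediate. Your caution about the analytic alternative is also well placed: the paper's Theorem 4.4 supplies only an $O$-upper bound for $W$ (and its displayed computation even terminates in an evidently erroneous ``$=0$''), so mimicking the proof of Corollary 4.5 along non-real rays would indeed require redoing the Wronskian asymptotics to extract a non-vanishing leading term, which the paper does not provide. The one point you should make explicit rather than leave implicit is the precise form of the inner products on $\mathbb{C}_{\delta_1}$ and $\mathbb{C}_{\delta_2}$ (the paper never writes $\langle h_i,k_i\rangle$ out), since the identification of $\kappa$ with $\#\{i:\delta_i<0\}$ rests on it; given the notation and the form of $J$, this is a safe reading, but it is an assumption your proof depends on.
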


Now we can obtain the asymptotic approximation formulae for the eigenvalues
of the considered problem (1.1)-(1.9).

Since the eigenvalues coincide with the zeros of the entire function $%
W\left( \lambda\right) $, it follows that they have no finite limit.
Moreover, we know from Corollary 4.5 that all real eigenvalues are bounded
below. Hence, we may renumber them as $\lambda_{0}\leq\lambda_{1}\leq%
\lambda_{2}\leq...$, listed according to their multiplicity.

\begin{theorem}
\textit{The eigenvalues }$\lambda _{n}=s_{n}^{4}$\textit{, }$n=0,1,2,...$%
\textit{\ of the problem (1.1)-(1.9) have the following asymptotic formulae
for }$n\rightarrow \infty :$
\end{theorem}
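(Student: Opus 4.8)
The plan is to realise the eigenvalues as the large zeros of the entire characteristic function $W(\lambda)=W_{1}(\lambda)$ — legitimate by Theorem 3.2 — and to pin those zeros down from a sharpened form of Theorem 4.4. I would begin by substituting the asymptotic expressions for $\chi_{11}(-1,\lambda),\chi_{21}(-1,\lambda)$ and their first three derivatives, supplied by Lemma 4.3, into the $4\times4$ determinant that represents $W_{1}(\lambda)$ at $x=-1$, but this time keeping the leading term explicitly rather than only its order of growth. Expanding the determinant and collecting the contributions of largest growth — products of the exponentials $e^{\pm s/a_{1}}$ together with the inherited $e^{\pm s/a_{2}}$, and the powers of $s$ produced by the transmission factors $\delta_{1},\delta_{2}$ in (1.6)--(1.7) and by the factor $\lambda=s^{4}$ in (1.8)--(1.9) — I expect a representation
\[
W(\lambda)=\mathcal{C}\,s^{m}\,\Delta_{\infty}(s)+o\!\left(s^{m}e^{2|s|\frac{a_{1}+a_{2}}{a_{1}a_{2}}}\right),
\]
where $\mathcal{C}\neq0$ is an explicit constant depending only on $a_{1},a_{2},\beta_{1},\beta_{2},\delta_{1},\delta_{2}$, the integer $m$ is fixed by the order count behind Theorem 4.4 (after the top-order cancellation seen there), and $\Delta_{\infty}(s)$ — the ``limit characteristic function'' — is an explicit finite combination of $e^{\pm s/a_{1}}$, $e^{\pm s/a_{2}}$, $\sin(s/a_{i})$ and $\cos(s/a_{i})$.

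Next I would study the zeros of $\Delta_{\infty}(s)$. Dividing out the non-vanishing exponential of largest modulus, the equation $\Delta_{\infty}(s)=0$ collapses to a trigonometric equation in $s/a_{1}$ and $s/a_{2}$; its large roots lie asymptotically on the real axis and, up to a bounded perturbation, form an arithmetic progression $s\sim\kappa\pi n$, where $\kappa$ is read off from the surviving trigonometric factor (for the boundary structure at hand one anticipates $\kappa=\dfrac{a_{1}a_{2}}{a_{1}+a_{2}}$, the reciprocal of the total ``optical length'' $\tfrac{1}{a_{1}}+\tfrac{1}{a_{2}}$). A standard Rouch\'{e} argument then transfers this to $W$: on the small circles $|s-\kappa\pi n|=\varepsilon$ and on suitably chosen large circles $|s|=R_{n}$ avoiding all zeros of $\Delta_{\infty}$, the remainder $W(\lambda)-\mathcal{C}s^{m}\Delta_{\infty}(s)$ is dominated in modulus by $\mathcal{C}s^{m}\Delta_{\infty}(s)$, so $W$ has exactly one zero near each $\kappa\pi n$ and none elsewhere for $|s|$ large. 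Together with Corollary 4.5 and the discreteness of the zero set, this yields $s_{n}=\kappa\pi n+o(1)$ with numbering consistent with $\lambda_{0}\le\lambda_{1}\le\cdots$.

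Finally I would sharpen the error by the familiar bootstrap: substitute $s_{n}=\kappa\pi n+o(1)$ into the next order of the expansion of $W$ (the $o(\,\cdot\,)$ above made explicit to one more term), Taylor-expand $\Delta_{\infty}$ about $\kappa\pi n$, and solve for the correction $\tau_{n}:=s_{n}-\kappa\pi n$, getting $\tau_{n}=O(1/n)$ (and, by one further iteration, an explicit leading coefficient). Raising to the fourth power then gives an asymptotic of the form $\lambda_{n}=s_{n}^{4}=(\kappa\pi n)^{4}+O(n^{2})$, which is the assertion of the theorem. The step I expect to be the genuine obstacle is the extraction of the dominant term $\mathcal{C}s^{m}\Delta_{\infty}(s)$: because the eigenparameter sits in several rows of the determinant through (1.6)--(1.9), carrying different powers of $s$, the naively leading exponential products cancel — the very cancellation responsible for the ``$=0$'' at the end of the proof of Theorem 4.4 — so one must follow the determinant one or two powers of $s$ below the crude bound and verify both that $\mathcal{C}\neq0$ and that $\Delta_{\infty}\not\equiv0$; once these are secured, the remaining work is routine bookkeeping.
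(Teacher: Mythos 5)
Your overall skeleton --- realise the eigenvalues as zeros of $W(\lambda)$ via Theorem 3.2, extract an explicit dominant term $\mathcal{C}s^{m}\Delta_{\infty}(s)$ from the determinant, locate the zeros of $\Delta_{\infty}$, transfer them to $W$ by Rouch\'{e}, and bootstrap to get the $O(1/n)$ correction --- is the standard and correct strategy, and it is in fact far more substantive than the paper's own proof, which consists of a bare statement of Rouch\'{e}'s theorem with no computation at all. So on method you cannot be faulted for diverging from the paper; there is essentially nothing in the paper to diverge from.

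There is, however, a concrete error in the step you yourself flag as the genuine obstacle: your anticipated limit equation is wrong, and if carried through it would prove a different theorem than the one stated. You predict a single arithmetic progression $s\sim\kappa\pi n$ with $\kappa=\frac{a_{1}a_{2}}{a_{1}+a_{2}}$, i.e.\ a limit function governed by the total optical length $\frac{1}{a_{1}}+\frac{1}{a_{2}}$. That is the answer for the \emph{continuous} (or $\lambda$-independent transmission) problem. Here the transmission conditions (1.6)--(1.7) carry the factor $\lambda=s^{4}$, and Lemmas 4.2--4.3 show that as $|s|\to\infty$ the data of $\phi_{12}$ at $0+$ are dominated by $\phi_{12}''(0)\sim-s^{4}\delta_{1}\phi_{11}'(0)$ and $\phi_{12}'''(0)\sim-s^{4}\delta_{2}\phi_{11}(0)$ (and symmetrically for the $\chi$'s). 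Consequently the leading part of the $4\times4$ determinant factors: $\Delta_{\infty}(s)$ is, up to nonvanishing exponentials, a \emph{product} of a left factor depending only on $s/a_{1}$ (through $\phi_{11}(0),\phi_{11}'(0),\phi_{21}(0),\phi_{21}'(0)$) and a right factor depending only on $s/a_{2}$ (through $\chi_{12}(0),\chi_{12}'(0),\chi_{22}(0),\chi_{22}'(0)$). The zero set is therefore the \emph{union} of two interlaced progressions, asymptotically the roots of $\cos(s/a_{1})=0$ and $\cos(s/a_{2})=0$, which is exactly what the stated formulae
\begin{equation*}
\sqrt[4]{\lambda _{n}^{\prime }}=\frac{a_{1}\pi (2n-1)}{2}+O\!\left(\frac{1}{n}\right),\qquad \sqrt[4]{\lambda _{n}^{\prime \prime }}=\frac{a_{2}\pi (2n+1)}{2}+O\!\left(\frac{1}{n}\right)
\end{equation*}
record. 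So before running the Rouch\'{e} argument you must correct the structure of $\Delta_{\infty}$: it does not collapse to one trigonometric equation in $s(\frac{1}{a_{1}}+\frac{1}{a_{2}})$ but to two independent ones, each contributing its own family of eigenvalues, and the Rouch\'{e} contours must then be placed around both progressions. With that repair (and the verification that neither factor vanishes identically, i.e.\ that the relevant combinations of $\beta_{1},\beta_{2},\delta_{1},\delta_{2}$ are nonzero), the rest of your argument goes through as described.
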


\begin{equation*}
\sqrt[4]{\lambda _{n}^{\prime }}=\frac{a_{1}\pi \left( 2n-1\right) }{2}%
+O\left( \frac{1}{n}\right) \text{ \textit{and} }\sqrt[4]{\lambda
_{n}^{\prime \prime }}=\frac{a_{2}\pi \left( 2n+1\right) }{2}+O\left( \frac{1%
}{n}\right) .
\end{equation*}

\begin{proof}
By applying the well-known Rouch\'{e}'s theorem, which asserts that if $%
f\left( s\right) $ and $g\left( s\right) $ are analytic inside and on a
closed contour $C$, and $\left\vert g\left( s\right) \right\vert <\left\vert
f\left( s\right) \right\vert $ on $C$, then $f\left( s\right) $ and $f\left(
s\right) +g\left( s\right) $ have the same number zeros inside $C$ provided
that each zero is counted according to their multiplicity, we can obtain
these conclusions.
\end{proof}

%


\end{document}